\documentclass[11pt]{amsart}
\usepackage{amssymb,amsmath,amsthm}
\usepackage[usenames]{color}
%% THEOREM ENVIRONMENTS
\theoremstyle{plain}
\newtheorem{theorem}{Theorem}[section]
\newtheorem{proposition}[theorem]{Proposition}
\newtheorem{lemma}[theorem]{Lemma}
\newtheorem*{lemma*}{Auxiliary Lemma}
\newtheorem{observation}[theorem]{Observation}
\newtheorem{corollary}[theorem]{Corollary}

\newtheorem{conjecture}[theorem]{Conjecture}
\newtheorem{problem}[theorem]{Problem}

\theoremstyle{definition}
\newtheorem{definition}[theorem]{Definition}

\newtheorem{remark}[theorem]{Remark}

%%%%%%%%%%%%%%%%%%%%%%%%%%%%%%%%%%%%

\DeclareMathOperator{\lk}{lk}
\DeclareMathOperator{\antist}{ast}

\DeclareMathOperator{\Sn}{\mathfrak{S}}
\DeclareMathOperator{\Pk}{\widehat{\mathfrak{S}}}

\DeclareMathOperator{\pk}{peak}
\DeclareMathOperator{\Des}{Dec}
\DeclareMathOperator{\des}{des}
\DeclareMathOperator{\B}{\mathcal{B}}
\DeclareMathOperator{\w}{\mathbf{w}}

\DeclareMathOperator{\wdn}{\grave{\textit{w}}}
\DeclareMathOperator{\wu}{\acute{\textit{w}}}
\DeclareMathOperator{\ve}{\mathbf{v}}
\DeclareMathOperator{\vd}{\grave{\textit{v}}}
\DeclareMathOperator{\vu}{\acute{\textit{v}}}
\DeclareMathOperator{\ue}{\mathbf{u}}
\DeclareMathOperator{\ud}{\grave{\textit{u}}}
\DeclareMathOperator{\uu}{\acute{\textit{u}}}
\DeclareMathOperator{\Ass}{Assoc}
\DeclareMathOperator{\Cyc}{Cyc}

\DeclareMathOperator{\susp}{susp}

%%% COMMENTS AND SUCH

%%%%%%%%%%%%%%%%%%%%%%%%%%%%%%%%%%%%%%%%%%%%%%%%%%%%%%

\title{On $\gamma$-vectors satisfying the Kruskal-Katona inequalities}
\author[E. Nevo]{Eran Nevo}
\thanks{Research of the first author partially supported by an NSF Award DMS-0757828.}
\address{Department of Mathematics, Cornell
University, Ithaca USA}
\email{eranevo@math.cornell.edu}
\author[T. K. Petersen]{T. Kyle Petersen}\address{Department of Mathematical Sciences, DePaul University, Chicago USA}
\email{tpeter21@depaul.edu}

%%%%%%%%%%%%%%%%%%%%%%%%%%%%%%%%%%%

\begin{document}
\maketitle
\begin{abstract}
We present examples of flag homology spheres whose $\gamma$-vectors satisfy the Kruskal-Katona inequalities. This includes several families of well-studied simplicial complexes, including Coxeter complexes and the simplicial complexes dual to the associahedron and to the cyclohedron. In these cases, we construct explicit flag simplicial complexes whose $f$-vectors are the $\gamma$-vectors in question, and so a result of Frohmader shows that the $\gamma$-vectors satisfy not only the Kruskal-Katona inequalities but also the stronger Frankl-F\"uredi-Kalai inequalities.  In another direction, we show that if a flag $(d-1)$-sphere has at most $2d+3$ vertices its $\gamma$-vector satisfies the Frankl-F\"uredi-Kalai inequalities. We conjecture that if $\Delta$ is a flag homology sphere then $\gamma(\Delta)$ satisfies the Kruskal-Katona, and further, the Frankl-F\"uredi-Kalai inequalities. This conjecture is a significant refinement of Gal's conjecture, which asserts that such $\gamma$-vectors are nonnegative.
\end{abstract}

\section{Introduction}\label{sec:Intro}

In \cite{Gal} Gal gave counterexamples to the real-root conjecture for flag spheres and conjectured a weaker statement which still implies the Charney-Davis conjecture. The conjecture is phrased in terms of the so-called \emph{$\gamma$-vector}.

\begin{conjecture}[Gal]\cite[Conjecture 2.1.7]{Gal}\label{conj:Gal}
If $\Delta$ is a flag homology sphere then $\gamma(\Delta)$ is nonnegative.
\end{conjecture}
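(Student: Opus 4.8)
The plan is to derive Gal's conjecture from the stronger conjecture advanced in this paper: since every entry of the $f$-vector of a simplicial complex is nonnegative, it suffices to prove that for an arbitrary flag homology $(d-1)$-sphere $\Delta$ the vector $\gamma(\Delta)$ satisfies the Kruskal--Katona inequalities, that is, equals the $f$-vector of \emph{some} simplicial complex $\Gamma(\Delta)$. The families settled in this paper --- Coxeter complexes, the duals of the associahedron and the cyclohedron, and flag spheres on at most $2d+2$ vertices --- would then be the base cases and the proving grounds for a uniform construction of $\Gamma(\Delta)$.

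First I would attempt to build $\Gamma(\Delta)$ by induction on $d$, using the one structural feature that survives passage to lower dimension: for every vertex $v$, the link $\lk_\Delta(v)$ is again a flag homology sphere, of dimension $d-2$. The inductive step calls for a recipe combining the complexes $\Gamma(\lk_\Delta(v))$, $v\in\V(\Delta)$, steered by the expansion $h(\Delta,t)=\sum_i \gamma_i\, t^i(1+t)^{d-2i}$ and by the recursions that $h$ and $\gamma$ obey under deletion of stars, so as to produce a $\Gamma(\Delta)$ with $f(\Gamma(\Delta))=\gamma(\Delta)$ and --- optimally --- one that is itself flag, in which case Frohmader's theorem yields the still stronger Frankl--F\"uredi--Kalai inequalities. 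In parallel I would pursue an algebraic route: exhibit a standard graded $k$-algebra whose Hilbert function is $\gamma(\Delta)$, constructed from the Stanley--Reisner ring $k[\Delta]$ after quotienting by a generic linear system of parameters and by further generic linear forms, using the hard Lefschetz property of homology spheres together with the hypothesis that $\Delta$ has no empty triangles to force the defining multiplication maps to have maximal rank; Macaulay's theorem then constrains the Hilbert function, and if the algebra can be arranged to be the Stanley--Reisner ring of an honest flag complex one recovers Kruskal--Katona --- indeed Frankl--F\"uredi--Kalai --- directly.

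The main obstacle, in either formulation, is that at present there is no mechanism that both reaches every flag homology sphere and tracks $\gamma$ along the way. On the combinatorial side, one would like every flag sphere to arise from the boundary of a cross-polytope by flag-preserving edge subdivisions --- each of which alters $\gamma$ in a controlled manner involving the $\gamma$-vector of the link of the subdivided edge, an operation one can hope to show preserves the Kruskal--Katona property --- but this structural statement is itself open and possibly false, so the gluing in the inductive step would have to be engineered directly and then made to cohere globally, precisely the point at which the structured examples of this paper stop being available as a crutch. On the algebraic side, what is known about Lefschetz elements for spheres already yields $g_i\ge 0$ and the Charney--Davis consequences, but not that the finer $(1+t)$-expansion of a \emph{flag} sphere is governed by additional generic linear forms with full-rank multiplications; isolating such forms and proving the rank statement from the no-empty-triangles hypothesis alone --- most plausibly by assembling local Lefschetz data along the closed stars $\cst_\Delta(v)$ via a local-to-global argument --- is the crux of that line of attack.
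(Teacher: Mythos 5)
The statement you are addressing is Gal's conjecture, which is precisely that --- a conjecture. The paper does not prove it, and neither do you: what you have written is a research program, not a proof, and you say so yourself (``at present there is no mechanism that both reaches every flag homology sphere and tracks $\gamma$ along the way''). The logical skeleton of your reduction is fine --- if $\gamma(\Delta)$ satisfies the Kruskal--Katona inequalities then it is the $f$-vector of some simplicial complex and hence nonnegative --- but the hypothesis of that reduction is exactly Conjecture \ref{conj:flagKrK} of this paper, which is itself open and strictly stronger than the statement you set out to prove. Deriving an open conjecture from a stronger open conjecture is not a proof; it is a (correct) observation about implications, and it is the observation that motivates the paper in the first place.

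Concretely, the gaps are the ones you flag: the inductive construction of a complex $\Gamma(\Delta)$ from the complexes $\Gamma(\lk_\Delta(v))$ has no defined gluing rule, and no argument that the resulting $f$-vector equals $\gamma(\Delta)$; the hoped-for generation of all flag spheres from cross-polytope boundaries by flag-preserving edge subdivisions is an unproved (and possibly false) structural claim, so the induction has no base-to-general mechanism; and on the algebraic side, the existence of the extra generic linear forms with full-rank multiplication maps that would force the $(1+t)$-expansion to be a Hilbert function is not known --- the known Lefschetz-type results give nonnegativity of the $g$-vector, not control of $\gamma$ for flag spheres. What the paper actually establishes is much more modest and unconditional: for the specific families in Theorem \ref{thm:KrKex} it builds explicit complexes ($\Gamma(\Des_n)$, $\Gamma(\Pk_n)$, $\Gamma(\Des_n^D)$, $\Gamma(\Pk_n(312))$, $\Gamma(P_n)$) whose $f$-vectors are the relevant $\gamma$-vectors, and for $\gamma_1\le 2$ it classifies the possible $\gamma$-polynomials directly (Propositions \ref{prop:1} and \ref{prop:2}); Gal's conjecture in full generality, and your Conjecture-\ref{conj:flagKrK}-based strengthening of it, remain open.
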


This conjecture is known to hold for the order complex of a Gorenstein$^*$ poset \cite{KaruCD}, all Coxeter complexes (see \cite{Stem}, and references therein), and for the (dual simplicial complexes of the) ``chordal nestohedra" of \cite{PRW}---a class containing the associahedron, permutahedron, and other well-studied polytopes.

If $\Delta$ has a nonnegative $\gamma$-vector, one may ask what these nonnegative integers count. In certain cases (the type A Coxeter complex, say), the $\gamma$-vector has a very explicit combinatorial description. We will exploit such descriptions to show that not only are these numbers nonnegative, but they satisfy certain non-trivial inequalities known as the \emph{Kruskal-Katona inequalities}. Put another way, such a $\gamma$-vector is the $f$-vector of a simplicial complex. Our main result is the following.

\begin{theorem}\label{thm:KrKex}
The $\gamma$-vector of $\Delta$ satisfies the Kruskal-Katona inequalities for each of the following classes of flag spheres:
\begin{itemize}
\item[(a)] $\Delta$ is a Coxeter complex.

\item[(b)] $\Delta$ is the simplicial complex dual to an associahedron.

\item[(c)] $\Delta$ is the simplicial complex dual to a cyclohedron (type B associahedron).
\end{itemize}
\end{theorem}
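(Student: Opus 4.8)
The plan is to handle the three cases by exhibiting, for each flag sphere $\Delta$ in question, an explicit simplicial complex $\Gamma$ whose $f$-vector equals $\gamma(\Delta)$; since $f$-vectors of simplicial complexes are exactly the sequences satisfying the Kruskal--Katona inequalities, this suffices. For part (a) I would first reduce to the irreducible finite Coxeter systems: the Coxeter complex of a product decomposes as a join, and $\gamma$-vectors are multiplicative under join (the $\gamma$-polynomial of a join is the product of the $\gamma$-polynomials), while the class of $f$-vectors of simplicial complexes is closed under the corresponding product operation (take joins of complexes). So it is enough to treat types $A$, $B$, $D$, the dihedral types $I_2(m)$, and the exceptional types $H_3, H_4, F_4, E_6, E_7, E_8$ one at a time. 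The exceptional and dihedral types are finite computations: one writes down $\gamma$ explicitly and checks the Kruskal--Katona bounds (or directly names a complex). For type $A$ the $\gamma$-vector counts permutations by descents with no two consecutive descents (equivalently $\gamma_i$ counts objects indexed by certain sparse subsets), and I expect a clean combinatorial model — e.g. a complex on a vertex set of size matching $\gamma_1$ whose faces correspond naturally to these "banner" or "peak-free" structures. For types $B$ and $D$ one uses the known descent-type formulas for the respective Coxeter complexes (signed permutations, even-signed permutations) and builds the analogous complex.

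For parts (b) and (c), the $\gamma$-vector of the dual associahedron (resp. cyclohedron) has a well-known interpretation: $\gamma_i$ counts triangulations of a polygon (resp. centrally symmetric triangulations) with a prescribed number of "ears" or internal diagonals of a certain kind, and these numbers are binomial-type (for the associahedron, $\gamma_i = \frac{1}{i+1}\binom{n-2-i}{i}\binom{n-1}{\,?\,}$-style Narayana-adjacent numbers; for the cyclohedron, the analogous type-$B$ count). The plan is to realize these as $f$-vectors directly: find a simplicial complex — plausibly itself a well-known object such as (the boundary of) a lower-dimensional associahedron/cyclohedron, or a skeleton thereof, or an explicit flag complex on diagonals — whose faces are counted by exactly these numbers. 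Concretely I would look for a bijection between the $i$-faces of the candidate complex $\Gamma$ and the combinatorial objects enumerated by $\gamma_i(\Delta)$, and then verify $\Gamma$ is genuinely a simplicial complex (closed under taking subsets), which is the only real content once the bijection is set up.

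The main obstacle will be the construction of the right complex $\Gamma$ in each infinite family, particularly type $D$ and the cyclohedron, where the combinatorial formulas for $\gamma$ are less symmetric than in type $A$ / the associahedron. Getting a nonnegative $\gamma$ is already nontrivial in these cases; upgrading to "this is the $f$-vector of an actual complex" requires a construction that is simultaneously (i) counted by the right numbers and (ii) downward closed. I anticipate that the cleanest route is to guess $\Gamma$ from the generating-function shape of $\gamma(\Delta,t)$ — for instance, if $\gamma(\Delta,t)$ factors or has nonnegative coefficients in a shifted basis, that often points to $\Gamma$ being a join or a near-cone — and then to prove downward-closure by an explicit combinatorial description of the faces (e.g. as certain sets of non-crossing diagonals, or certain sparse subsets of positions with a compatibility relation). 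Verifying the Kruskal--Katona inequalities by hand as a fallback, using the known closed forms and the colex-order characterization of $\partial$, is always available for the small/exceptional cases but would be unenlightening for the families, so I would only resort to it where no natural $\Gamma$ presents itself.
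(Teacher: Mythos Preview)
Your overall strategy---exhibit an explicit simplicial complex $\Gamma$ with $f(\Gamma)=\gamma(\Delta)$ for each infinite family, and verify the exceptional Coxeter types by a finite check---is exactly the paper's approach. The reduction to irreducible types via multiplicativity of $\gamma$ under join is sound (the paper does not bother to state it, simply treating each type), and your decision to handle $I_2(m)$ and the exceptionals numerically matches the paper's Table~1.

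Where your plan diverges is in the specific combinatorial models, and this is where all the content lives. For types $A$, $B$, $D$ the paper does not use ``banner'' or peak-free subsets of positions; it builds a single master flag complex $\Gamma(\Des_n)$ on \emph{decorated permutations} with one bar (four colours), defines adjacency so that faces correspond bijectively to decorated permutations with more bars, and then realizes the $A$ and $D$ cases as flag subcomplexes induced on suitable vertex subsets. For the associahedron the paper does not use triangulations or ears: it takes vertices to be pairs $(a,b)$ with $1\le a<b\le n-1$ and faces to be pairwise \emph{noncrossing} collections, and shows these are in bijection with $312$-avoiding permutations in $\widehat{\mathfrak{S}}_n$. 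For the cyclohedron the paper uses $\gamma_i=\binom{n}{i,i,n-2i}$ directly: vertices are ordered pairs $(l,r)\in[n]^2$ with $l\ne r$, and adjacency requires distinct entries with the two coordinates ordered consistently. Your hints (triangulations, centrally symmetric diagonals, Narayana-style formulas) point toward different but plausible models; they may well work, but you have not supplied any of them, and the entire substance of the theorem is the verification that some concrete $\Gamma$ is downward-closed and has the right face count. As written, your proposal is a correct outline of the method but contains none of the constructions that constitute the proof.
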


Note that the type A Coxeter complex is dual to the permutahedron, and for types B and D there is a similarly defined polytope---the ``Coxeterhedron" of Reiner and Ziegler \cite{RZ}.

We prove Theorem \ref{thm:KrKex} by constructing, for each such $\Delta$, a simplicial complex whose faces correspond to the combinatorial objects enumerated by $\gamma(\Delta)$.

In a different direction, we are also able to show that if $\Delta$ is a flag sphere with few vertices relative to its dimension, then its $\gamma$-vector satisfies the Kruskal-Katona inequalities.

\begin{theorem}\label{thm:smallGamma1}
Let $\Delta$ be a $(d-1)$-dimensional flag homology sphere with at most $2d+3$ vertices, i.e., with $\gamma_1(\Delta)\leq 3$.
Then $\gamma(\Delta)$ satisfies the Kruskal-Katona inequalities. Moreover, all possible $\gamma$-polynomials with $\gamma_1\leq 3$ that satisfy the Kruskal-Katona inequalities, except for $1+3t+3t^2$,   occur as
$\gamma(\Delta;t)$ for some flag sphere $\Delta$.
\end{theorem}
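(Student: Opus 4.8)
The plan is to determine, for each value of $\gamma_1(\Delta)$, exactly which $\gamma$-polynomials occur (note $\gamma_1(\Delta)=f_0(\Delta)-2d$, so the hypothesis is precisely that $\Delta$ has at most $2d+2$ vertices). Since $1$, $1+t$, $1+2t$ and $1+2t+t^2$ are the $f$-polynomials of, respectively, the complex $\{\emptyset\}$, a vertex, two vertices, and a single edge, all four automatically satisfy the Kruskal-Katona inequalities; so it suffices to show that these are the only $\gamma$-polynomials that can arise and that each of them does. Two elementary observations drive the argument. First, a \emph{local lemma}: if a vertex $v$ of a flag $(d-1)$-sphere $\Delta$ has a unique non-neighbor $v'$, then $\lk v=\lk v'$ and $\Delta=\{v,v'\}*\lk v$ is a suspension; indeed, a facet avoiding $v$ must contain $v'$ (otherwise, since $\Delta$ is flag, adjoining $v$ would create a $d$-face), and chasing through each codimension-one face of $\lk v$ the two facets of $\Delta$ containing it forces $\lk v=\lk v'$. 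As suspension preserves the $\gamma$-polynomial, we may always assume, after de-suspending, that no vertex of $\Delta$ has a unique non-neighbor. Second, a \emph{link identity}: if $n(v)$ denotes the number of non-neighbors of $v$, then $\gamma_1(\lk v)=\gamma_1(\Delta)+1-n(v)$, and summing this over $v$ while expressing $f_0$ and $f_1$ in terms of $d,\gamma_1,\gamma_2$ gives $\sum_v\gamma_1(\lk v)=(2d-4)\gamma_1(\Delta)+2\gamma_2(\Delta)$. Together with the (elementary) fact that flag spheres have nonnegative $\gamma_1$, the first of these shows that every vertex has at most $\gamma_1(\Delta)+1$ non-neighbors.

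The cases $\gamma_1(\Delta)\le 1$ are short. If $\gamma_1(\Delta)=0$, every vertex has exactly one non-neighbor, so by the local lemma $\Delta$ is a suspension, and induction on $d$ gives $\Delta\cong\partial\Diamond_d$, the boundary of the $d$-dimensional cross-polytope, with $\gamma(\Delta;t)=1$. If $\gamma_1(\Delta)=1$, we may assume $\Delta$ is not a suspension, so every vertex has exactly two non-neighbors; by the link identity each vertex link then has $\gamma_1=0$, hence is $\partial\Diamond_{d-1}$, whose non-edge graph is a perfect matching. Double-counting the non-adjacencies between a vertex $v$ and its two non-neighbors $v_1,v_2$: each neighbor of $v$ has exactly one non-neighbor among $v$'s neighbors, so this count equals $2d-2$; on the other hand it is at most $(n(v_1)-1)+(n(v_2)-1)=2$. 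Hence $d=2$ and $\Delta\cong C_5$, so $\gamma(\Delta;t)=1+t$; in general a flag $(d-1)$-sphere with $\gamma_1=1$ is combinatorially isomorphic to $C_5*\partial\Diamond_{d-2}$.

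For $\gamma_1(\Delta)=2$ I would induct on $d$ using a decomposition $\Delta=\Delta_1*\cdots*\Delta_r$ into join-irreducible flag spheres, noting that $\gamma_1(\Delta)=\sum_i\gamma_1(\Delta_i)$ and that a join-irreducible flag sphere of positive dimension is not a suspension and has connected non-edge graph. The factors with $\gamma_1=0$ must each be $S^0$; if two factors have $\gamma_1=1$, both are $C_5$ by the previous case and $\gamma(\Delta;t)=(1+t)^2$; otherwise there is a single join-irreducible factor $\Delta'$ with $\gamma_1(\Delta')=2$, which is the crux. Every vertex of $\Delta'$ has two or three non-neighbors. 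If all have two, the non-edge graph of $\Delta'$ is a connected $2$-regular graph, hence a single cycle, so the non-edge graph induced on the neighbors of a vertex $v$ is a path; but that induced graph equals the non-edge graph of $\lk v$, a flag sphere with $\gamma_1=1$, hence of a join of a $5$-cycle with a cross-polytope boundary, and so is a $5$-cycle together with disjoint matching edges, which is never a path, a contradiction. So some vertex $v$ of $\Delta'$ has three non-neighbors $v_1,v_2,v_3$, whence $\lk v=\partial\Diamond_{d'-1}$ and, by the same double-count as before, $2d'-2\le (n(v_1)-1)+(n(v_2)-1)+(n(v_3)-1)\le 6$, so $d'\le 4$. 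In the boundary case $d'=4$ the double-count is an equality, which pins down all the vertex degrees in the non-edge graph and, via the link identity, forces $\gamma_2(\Delta')=-1$, contradicting $\gamma_2(\Delta')\ge 0$ (the Charney-Davis inequality for flag $3$-spheres, a theorem of Davis and Okun). Hence $d'\le 3$ and $\gamma(\Delta';t)=1+2t$. Thus the only $\gamma$-polynomials with $\gamma_1\le 2$ are the four claimed; and each occurs, realized by $\partial\Diamond_d$ (e.g.\ $S^0$), $C_5$, $C_6$, and $C_5*C_5$ respectively.

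The main obstacle is precisely this analysis of a join-irreducible flag sphere $\Delta'$ with $\gamma_1(\Delta')=2$: excluding dimensions $d'\ge 5$, and eliminating the anomalous $10$-vertex configuration in dimension $d'=4$. I expect the former to fall out of the non-edge-graph and link bookkeeping above, using that the link classifications obtained in the cases $\gamma_1=0,1$ tightly constrain the global non-edge graph; the latter seems genuinely to need the external input $\gamma_2\ge 0$ for flag $3$-spheres, and it would be worthwhile to see whether that configuration can instead be ruled out by a direct local argument.
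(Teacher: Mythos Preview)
Your approach is correct and genuinely different from the paper's.

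\textbf{What you do.} You work entirely with the non-edge graph and vertex links: the identity $\gamma_1(\lk v)=\gamma_1(\Delta)+1-n(v)$ bounds $n(v)\le\gamma_1+1$, your ``local lemma'' splits off suspension factors, and a double count of non-adjacencies between $\{v_1,\dots\}$ and the neighbors of $v$ bounds the dimension of any de-suspended piece. For $\gamma_1\le 1$ this actually yields a complete combinatorial classification ($\partial\Diamond_d$ and $C_5*\partial\Diamond_{d-2}$), which is stronger than what the paper proves. For $\gamma_1=2$ you pass to a join-irreducible factor; the subcase ``all $n(v)=2$'' is killed by comparing the non-edge graph of $\lk v$ (forced to be $C_5$ union a matching) with the path you would get from a single cycle; the subcase ``some $n(v)=3$'' forces $d'\le 4$, and $d'=4$ is eliminated via $\sum_v\gamma_1(\lk v)=(2d-4)\gamma_1+2\gamma_2$ and the inequality $\gamma_2\ge 0$ for flag $3$-spheres.

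\textbf{What the paper does.} The paper never analyzes the global non-edge graph. Instead it uses the edge-contraction identity $\gamma(\Delta;t)=\gamma(\Delta';t)+t\,\gamma(\lk\{u,v\};t)$ together with Alexander duality and a local study of induced $4$-cycles. For $\gamma_1=1$ it contracts the unique edge in the interior of an antistar. For $\gamma_1=2$ it splits on $i(v)=n(v)\in\{1,2,3\}$: in the last case it proves directly that some edge inside the antistar can be contracted while preserving flagness, by ruling out all induced $4$-cycles through it. No appeal to $\gamma_2\ge 0$ is needed.

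\textbf{Trade-offs.} Your argument is cleaner and more structural (and yields an actual classification for $\gamma_1\le 1$), but imports the Davis--Okun/Gal inequality $\gamma_2\ge 0$ for flag $3$-spheres to dispose of the $10$-vertex case; the paper's contraction argument is entirely self-contained. One small point to make explicit in your write-up: the join decomposition ``into join-irreducible flag spheres'' requires that join factors of a flag homology sphere are again flag homology spheres (true, via K\"unneth and $\lk_{A*B}(F)=\lk_A(F_A)*\lk_B(F_B)$), and your $\gamma_1\le 1$ classification must then be stated for flag \emph{homology} spheres---which your proof already handles, since it only uses the link formula, Meshulam--Gal, and that a $1$-dimensional homology sphere is a cycle.
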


Theorem \ref{thm:smallGamma1} is proved by characterizing the structure of such flag spheres.

Computer evidence suggests that Theorems \ref{thm:KrKex} and \ref{thm:smallGamma1} may be enlarged significantly. We make the following strengthening of Gal's conjecture.

\begin{conjecture}\label{conj:flagKrK}
If $\Delta$ is a flag homology sphere then $\gamma(\Delta)$ satisfies the Kruskal-Katona inequalities.
\end{conjecture}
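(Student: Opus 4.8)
The plan is to establish the formally stronger assertion that every flag homology sphere $\Delta$ carries a simplicial complex $\Gamma(\Delta)$ with $f(\Gamma(\Delta);t)=\gamma(\Delta;t)$, building the family $\{\Gamma(\Delta)\}$ by induction in a way that is compatible with passing to vertex and edge links. Two reductions set the stage. First, since $\gamma$ is multiplicative over joins and $f(\Gamma_1\ast\Gamma_2;t)=f(\Gamma_1;t)f(\Gamma_2;t)$, the class of $f$-polynomials of simplicial complexes is closed under products; hence it suffices to treat join-indecomposable $\Delta$, and in particular $\gamma$ is invariant under suspension. Second, the base of the induction comprises the cases handled by Theorems~\ref{thm:KrKex} and~\ref{thm:smallGamma1}, together with $\dim\Delta\le 1$, where $\gamma(\Delta;t)$ is $1$ or $1+(n-4)t$ for an $n$-cycle (and $(1,n-4)$ is the $f$-vector of $n-4$ points).

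The engine is the behaviour of $\gamma$ under a stellar edge subdivision. If $\Delta$ is a flag homology $(d-1)$-sphere and $e$ an edge of it, then $\mathrm{sd}_e\Delta$ is again a flag homology $(d-1)$-sphere, $\lk_\Delta(e)$ is a flag homology $(d-3)$-sphere, and from $h(\mathrm{sd}_e\Delta;t)=h(\Delta;t)+t\,h(\lk_\Delta(e);t)$ one obtains
\[
\gamma(\mathrm{sd}_e\Delta;t)=\gamma(\Delta;t)+t\,\gamma(\lk_\Delta(e);t).
\]
By induction on dimension, $\gamma(\lk_\Delta(e))$ is realised by a complex $\Gamma(\lk_\Delta(e))$; the key point is to arrange, coherently with the inclusion $\lk_\Delta(e)\hookrightarrow\Delta$, that this complex occurs as a subcomplex of $\Gamma(\Delta)$. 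Granting that, let $v$ be a new vertex and set $\Gamma(\mathrm{sd}_e\Delta)=\Gamma(\Delta)\cup\{\{v\}\cup\sigma:\sigma\in\Gamma(\lk_\Delta(e))\}$. This is a simplicial complex, and attaching the cone over a subcomplex in this way adds exactly one new face $\{v\}\cup\sigma$ for each $\sigma$, so $f(\Gamma(\mathrm{sd}_e\Delta);t)=f(\Gamma(\Delta);t)+t\,f(\Gamma(\lk_\Delta(e));t)=\gamma(\mathrm{sd}_e\Delta;t)$. Making the assignment $\Delta\mapsto\Gamma(\Delta)$ genuinely functorial with respect to links — so that the required subcomplex inclusions are available at every stage — is the technical heart; one would presumably have to describe $\Gamma(\Delta)$ intrinsically, in the spirit of the explicit models behind Theorem~\ref{thm:KrKex}, whose faces index combinatorial data attached to $\Delta$ (restricted chains of faces, descent-type statistics, and so on).

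The main obstacle is that stellar edge subdivisions do not reach every flag homology sphere from a cross-polytope boundary: $\Delta$ arises as an edge subdivision only when it has a vertex $w$ with $\lk_\Delta(w)=\{a,b\}\ast L$ and $\{a,b\}\notin\Delta$, and already in dimension two there are flag spheres with no such vertex — the icosahedron, for instance, which is covered by neither Theorem~\ref{thm:KrKex} nor Theorem~\ref{thm:smallGamma1} yet has $\gamma(\Delta;t)=1+6t$. Consequently the induction on the number of vertices within a fixed dimension stalls at the ``edge-subdivision-irreducible'' flag spheres, and to finish one needs either a generation theorem presenting every flag homology sphere, from cross-polytope boundaries, through a controlled list of flag- and sphere-preserving moves (edge subdivisions, their legal inverses, suitable flag bistellar flips), with a formula for $\gamma$ under each move and a proof that the Kruskal--Katona property is preserved, or else a uniform, direct construction of $\Gamma(\Delta)$ from the combinatorics of $\Delta$. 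Both routes are wide open — even the nonnegativity in Gal's conjecture is presently known only case by case — so, pending such a mechanism, the realistic path is to enlarge the list of families as in Theorem~\ref{thm:KrKex}. It may well be more natural to aim for the stronger conclusion that $\Gamma(\Delta)$ can always be taken flag (the Frankl--F\"uredi--Kalai refinement of Kruskal--Katona), since flag complexes are closed under the join and cone operations used above and the $\Gamma(\Delta)$ produced in Theorem~\ref{thm:KrKex} appear to be flag.
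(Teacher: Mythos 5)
The statement you were asked to prove is Conjecture \ref{conj:flagKrK}, which the paper itself leaves open: it is offered as a strengthening of Gal's conjecture, and the paper only supplies evidence --- the families of Theorem \ref{thm:KrKex} (via the explicit $\Gamma$-complexes of Section \ref{sec:Gam} and Table \ref{table}), the low-vertex case of Theorem \ref{thm:smallGamma1}, and flag homology spheres of dimension at most $4$ via Gal's bound $\gamma_2\leq\gamma_1^2/4$. So there is no proof in the paper to compare against, and, to your credit, you do not claim one either. The individual assertions you make along the way are sound: $\gamma$ is multiplicative under joins and invariant under suspension; your edge-subdivision identity $\gamma(\mathrm{sd}_e\Delta;t)=\gamma(\Delta;t)+t\,\gamma(\lk_\Delta(e);t)$ is exactly the paper's contraction identity \eqref{eq:con} read in reverse (and edge subdivision does preserve flagness and the homology-sphere property); coning a new vertex over a subcomplex $\Gamma(\lk_\Delta(e))\subseteq\Gamma(\Delta)$ does add exactly $t\,f(\Gamma(\lk_\Delta(e));t)$ to the $f$-polynomial; and the icosahedron is a correct witness for your obstruction ($\gamma=1+6t$, every vertex link a pentagon, hence not a suspension and not an edge subdivision of a flag sphere, and it is covered by neither Theorem \ref{thm:KrKex} nor Theorem \ref{thm:smallGamma1}).

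The genuine gap is precisely the one you name, and it is fatal to the plan as a proof: there is no generation theorem presenting every flag homology sphere from octahedral spheres by flagness- and sphericity-preserving moves with controlled effect on $\gamma$, and no intrinsic, link-compatible construction $\Delta\mapsto\Gamma(\Delta)$ is known --- even for the explicit complexes of Section \ref{sec:Gam}, nothing identifies $\Gamma(\lk_\Delta(e))$ with a subcomplex of $\Gamma(\Delta)$, and that compatibility is exactly what your coning step consumes. What you have written is therefore a research programme, closely parallel in spirit to the paper's Section \ref{sec:small}, where the same identity \eqref{eq:con} plus a structural analysis succeeds only because $\gamma_1\leq 2$ forces the relevant links and contractions to be tractable; beyond that range the induction has no base and no generating moves. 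Your closing suggestion --- to aim for $\gamma(\Delta)$ being the $f$-vector of a flag (hence balanced) complex, so that the Frankl-F\"uredi-Kalai inequalities hold --- is likewise anticipated by the paper in Theorem \ref{thm:FFK}, Conjecture \ref{conj:flagFFK} and Problem \ref{prob:flagFlag}, and remains open as well.
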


%\kyle{This is a very bold conjecture. I think we should state it in some form, but if we want to we can make the claim softer by writing it within the text, rather than setting it apart in theorem style as it is now.}\eran{let us be bold! (the conj. is true for flag 3-spheres), see below:}
%\eran{changed below. note that if $\gamma_1<2$ it also follows as $\gamma_2$ is an integer.}

This conjecture is true, but not sharp, for flag homology $3$- (or $4$-) spheres. Indeed, Gal showed that $0\leq \gamma_2(\Delta)\leq \gamma_1(\Delta)^2/4$
 must hold for flag homology $3$- (or $4$-) spheres \cite{Gal}, which implies the Kruskal-Katona inequality $\gamma_2(\Delta)\leq \binom{\gamma_1(\Delta)}{2}$.
Our stronger Conjecture \ref{conj:flagFFK} is sharp for flag homology spheres of dimension at most $4$. 
%(However, even Conjecture \ref{conj:flagFFK} is not sharp in general---see Conjecture \ref{prob:flagFlag}). 

In Section \ref{sec:terminology} we review some key definitions. Section \ref{sec:coxnest} collects some known results describing the combinatorial objects enumerated by the $\gamma$-vectors of Theorem \ref{thm:KrKex}. Section \ref{sec:Gam} constructs simplicial complexes based on these combinatorial objects and proves Theorem \ref{thm:KrKex}. Section \ref{sec:small} is given to the proof of Theorem \ref{thm:smallGamma1}.
Finally, Section \ref{sec:FFK} describes a strengthening of Theorem \ref{thm:KrKex} by showing that under the same hypotheses the stronger Frankl-F{\"u}redi-Kalai inequalities hold for the $\gamma$-vector. These inequalities hold in Theorem \ref{thm:smallGamma1} as well, leading us to present a stronger companion to Conjecture \ref{conj:flagKrK}, namely Conjecture \ref{conj:flagFFK}.

\section{Terminology}\label{sec:terminology}

A \emph{simplicial complex} $\Delta$ on a vertex set $V$ is a collection of subsets $F$ of $V$, called \emph{faces}, such that:
\begin{itemize}
\item if $v \in V$ then $\{v\} \in \Delta$,
\item if $F \in \Delta$ and $G \subset F$, then $G \in \Delta$.
\end{itemize}
The dimension of a face $F$ is $\dim F = |F|-1$. In particular $\dim \emptyset=-1$. The dimension of $\Delta$, denoted by $\dim \Delta$, is the maximum of the dimensions of its faces.

 We say that $\Delta$ is \emph{flag} if all the minimal subsets of $V$ which are not in $\Delta$ have size $2$; equivalently $F\in \Delta$ if and only if all the \emph{edges} of $F$ (two element subsets) are in $\Delta$.

We say that $\Delta$ is a sphere if its geometric realization is homeomorphic to a sphere. The \emph{link} $\lk(F) = \lk_{\Delta}(F)$ of a face $F$ of $\Delta$ is the set of all $G \in \Delta$ such that $F \cup G \in \Delta$ and $F \cap G = \emptyset$.
We say that $\Delta$ is a \emph{homology sphere} if for every face $F\in \Delta$, $\lk(F)$ is homologous to the $(\dim \Delta - |F|)$-dimensional sphere. In particular, if $\Delta$ is a sphere then $\Delta$ is a homology sphere.

The \emph{$f$-polynomial} of a $(d-1)$-dimensional simplicial complex $\Delta$ is the generating function for the dimensions of the faces of the complex: \[ f(\Delta;t) := \sum_{F \in \Delta} t^{\dim F + 1} = \sum_{0\leq i \leq d} f_i(\Delta) t^i.\] The \emph{$f$-vector} \[ f(\Delta) := (f_0, f_1,\ldots,f_d) \] is the sequence of coefficients of the $f$-polynomial. We have that $f_i$ is the number of $(i-1)$-dimensional faces of $\Delta$. (We caution the reader that other authors index the $f$-vector as $(f_{-1}, f_0, \ldots, f_{d-1})$, so that $f_i$ is the number of $i$-dimensional faces.)

The \emph{$h$-polynomial} of $\Delta$ is a transformation of the $f$-polynomial:
\[ h(\Delta;t) := (1-t)^d f(\Delta; t/(1-t)) = \sum_{0\leq i\leq d} h_i(\Delta) t^i,\] and the \emph{$h$-vector} is the corresponding sequence of coefficients, \[h(\Delta) := (h_0,h_1,\ldots, h_d).\] Though they contain the same information, often the $h$-polynomial is easier to work with than the $f$-polynomial. For instance, if $\Delta$ is a homology sphere, then the \emph{Dehn-Sommerville relations} guarantee that the $h$-vector is symmetric, i.e., $h_i = h_{d-i}$ for all $0\leq i\leq d$.

When referring to the $f$- or $h$-polynomial of a simple polytope, we mean the $f$- or $h$-polynomial of the boundary complex of its dual. So, for instance, we refer to the $h$-vector of the type A Coxeter complex and the permutahedron interchangeably.

Whenever a polynomial of degree $d$ has symmetric integer coefficients, it has an integer expansion in the basis $\{  t^i(1+t)^{d-2i} : 0\leq i \leq d/2\}$. Specifically, if $\Delta$ is a $(d-1)$-dimensional homology sphere then there exist integers $\gamma_i(\Delta)$ such that \[ h(\Delta;t) = \sum_{0\leq i \leq d/2} \gamma_i(\Delta) t^i (1+t)^{d-2i}.\] We refer to the sequence $\gamma(\Delta) :=(\gamma_0, \gamma_1,\ldots)$ as the \emph{$\gamma$-vector} of $\Delta$, and the corresponding generating function $\gamma(\Delta;t) = \sum \gamma_i t^i$ is the \emph{$\gamma$-polynomial}. Our goal is to show that under the hypotheses of Theorems \ref{thm:KrKex} and \ref{thm:smallGamma1} the $\gamma$-vector for $\Delta$ is seen to be the $f$-vector for some other simplicial complex.

A result of Sch\"utzenberger, Kruskal and Katona (all independently), characterizes the $f$-vectors of simplicial complexes as follows. (See \cite[Ch. II.2]{Sta}.) By convention we call the conditions characterizing these $f$-vectors the \emph{Kruskal-Katona inequalities}.

Given a pair of integers $a$ and $i$ there is a unique expansion:
\[ a = \binom{a_i}{i} + \binom{a_{i-1}}{i-1} + \cdots + \binom{a_j}{j},\]
where $a_i > a_{i-1} > \cdots > a_j \geq j.$ With this in mind, define
\[ a^{(i)} = \binom{a_i}{i+1} + \binom{a_{i-1}}{i} + \cdots + \binom{a_j}{j+1}, \quad 0^{( i )} = 0.\]

\begin{theorem}[Katona, Kruskal, Sch\"utzenberger]
An integer vector $(f_0,f_1 \ldots)$ is the $f$-vector of a simplicial complex if and only if:
\begin{enumerate}
\item[(a)] $f_0 =1$,
\item[(b)] $f_i \geq 0$,
\item[(c)] $f_{i+1} \leq f_i^{(i)}$ for $i = 1,2\ldots$.
\end{enumerate}
\end{theorem}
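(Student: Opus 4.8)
Since this theorem is classical---it is due independently to Sch\"utzenberger, Kruskal, and Katona, and the authors already cite \cite[Ch.~II.2]{Sta}---the goal of a proof proposal here is to recall the shape of the argument rather than to reprove it from scratch. I would separate the two implications as usual. For necessity, conditions (a) and (b) are immediate: $f_0$ counts the empty face and each $f_i$ is a cardinality. The content is in (c), and the plan is to deduce it from the Kruskal--Katona shadow inequality. Writing $\mathcal{F}_k$ for the collection of faces of $\Delta$ of cardinality $k$, the point is that every $i$-subset of an $(i+1)$-face is again a face, so the lower shadow $\partial\mathcal{F}_{i+1} := \{G : |G|=i,\ G\subset F \text{ for some } F\in\mathcal{F}_{i+1}\}$ is contained in $\mathcal{F}_i$; hence $f_{i+1}$ is at most the maximum size of a family of $(i+1)$-sets whose shadow fits inside $f_i$ many $i$-sets, and the Kruskal--Katona theorem identifies that maximum with $f_i^{(i)}$.

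To prove the underlying extremal statement---among all families of $m$ sets of size $i+1$, the colexicographic initial segment minimizes the size of the shadow---I would use the compression (shifting) method: for $1\le a<b$ define the operator $C_{ab}$ that replaces each $S\ni b$ with $b$ but not $a$ by $(S\setminus\{b\})\cup\{a\}$ whenever the latter set is not already present, verify that $|C_{ab}\mathcal{G}|=|\mathcal{G}|$ and $|\partial C_{ab}\mathcal{G}|\le|\partial\mathcal{G}|$, iterate the $C_{ab}$ until the family is left-compressed, and then analyze a left-compressed family by induction on the size of the ground set, splitting off the largest vertex. A bookkeeping computation with the $i$-cascade representation $a=\binom{a_i}{i}+\cdots+\binom{a_j}{j}$ then converts the resulting shadow bound into the stated clean inequality $f_{i+1}\le f_i^{(i)}$.

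For sufficiency, given a vector satisfying (a)--(c), the plan is to exhibit a realizing complex, namely the \emph{compressed} (colex) complex: for each $i$, take the $(i-1)$-faces to be the first $f_i$ sets of size $i$ in the colexicographic order on $\{1,2,3,\ldots\}$. One checks that these initial segments are nested across dimensions precisely because the shadow of the first $f_{i+1}$ sets of size $i+1$ is again a colex initial segment of $i$-sets, of size at most $f_i$ by condition (c); consequently the union over all $i$ is closed under taking subsets, hence is a simplicial complex, and by construction its $f$-vector is $(f_0,f_1,\ldots)$. Here it matters that (c) is stated with the exact cascade operation $a\mapsto a^{(i)}$ rather than a weaker estimate, since that is exactly what makes the colex construction work on the nose.

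The main obstacle is the shadow-minimization lemma behind necessity---establishing that colex initial segments are shadow minimizers---together with the somewhat delicate arithmetic needed to reconcile the bound coming out of the compression argument with the operation $a\mapsto a^{(i)}$ on cascade representations; once that numerology is in hand, both the "only if" conclusion and the colex realization in the "if" direction fall out by direct verification.
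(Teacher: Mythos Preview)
The paper does not prove this theorem: it is quoted as a classical result with a reference to \cite[Ch.~II.2]{Sta}, so there is no ``paper's own proof'' to compare against. Your outline---compression/shifting to show colex initial segments minimize shadows, then the colex complex for sufficiency---is the standard argument and is correct in shape; nothing more is needed here.
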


We will use the Kruskal-Katona inequalities directly for Theorem \ref{thm:smallGamma1} and for checking the Coxeter complexes of exceptional type in  part (a) of Theorem \ref{thm:KrKex}. (See Table \ref{table}.) For the remainder of Theorem \ref{thm:KrKex} we construct explicit simplicial complexes with the desired $f$-vectors.

\begin{table}[t]
\begin{centering}
\begin{tabular}{c | c}
$W$ & $\gamma(W)$ \\
\hline \hline
$E_6$ & $(1,1266,7104,3104)$\\
\hline
$E_7$ & $(1,17628,221808,282176)$ \\
\hline
$E_8$ & $(1, 881744, 23045856, 63613184, 17111296)$\\
\hline
$F_4$ & $(1,232,208)$\\
\hline
$G_2$ & $(1,8)$\\
\hline
$H_3$ & $(1,56)$\\
\hline
$H_4$ & $(1,2632,3856)$\\
\hline
$I_2(m)$ & $(1, 2m-4)$\\
\hline
\end{tabular}
\bigskip
\end{centering}
\caption{The $\gamma$-vectors for finite Coxeter complexes of exceptional type.}
\label{table}
\end{table}

\section{Combinatorial descriptions of $\gamma$-nonnegativity}\label{sec:coxnest}

Here we provide combinatorial descriptions (mostly already known) for the $\gamma$-vectors of the complexes described in Theorem \ref{thm:KrKex}.

\subsection{Type A Coxeter complex}\label{sec:A}

We begin by describing the combinatorial objects enumerated by the $\gamma$-vector of the type $A_{n-1}$ Coxeter complex, or equivalently, the permutahedron. (For the reader looking for more background on the Coxeter complex itself, we refer to \cite[Section 1.15]{Humph}; for the permutahedron see \cite[Example 0.10]{Zieg}.)

Recall that a \emph{descent} of a permutation $w=w_1w_2\cdots w_n \in \mathfrak{S}_n$ is a position $i \in [n-1]$ such that $w_i > w_{i+1}$. 
%\eran{Def. below: do you want to allow valley at position $n$? Term "valley" is used later.}
A \emph{peak} (resp. \emph{valley}) is a position $i \in [2,n-1]$ such that $w_{i-1} < w_i > w_{i+1}$ (resp. $w_{i-1} > w_i < w_{i+1}$). We let $\des(w)$ denote the number of descents of $w$, and we let $\pk(w)$ denote the number of peaks. It is well known that the $h$-polynomial of the type $A_{n-1}$ Coxeter complex is expressed as:
\[ h(A_{n-1};t) = \sum_{w \in \mathfrak{S}_n} t^{\des(w)}.\]  Foata and Sch\"utzenberger were the first to  demonstrate the $\gamma$-nonnegativity of this polynomial (better known as the \emph{Eulerian polynomial}), showing $h(A_{n-1};t) = \sum \gamma_i t^i (1+t)^{n-1-2i}$, where $\gamma_i$ = the number of equivalence classes of permutations of $n$ with $i+1$ peaks \cite{FS}. (Two permutations are in the same equivalence class if they have the same sequence of values at their peaks and valleys.) See also Shapiro, Woan, and Getu \cite{SWG} and, in a broader context, Br\"and\'en \cite{Br} and Stembridge \cite{Stem}.

Following Postnikov, Reiner, and Williams \cite{PRW}, we choose the following set of representatives for these classes:
\[ \Pk_n = \{ w \in \mathfrak{S}_n : w_{n-1} < w_n, \mbox{ and if } w_{i-1} > w_i \mbox{ then } w_i < w_{i+1} \}.\] In other words, $\Pk_n$ is the set of permutations $w$ with no double descents and no final descent, or those for which $\des(w) = \pk(0w0)-1$. We now phrase the $\gamma$-nonnegativity of the type $A_{n-1}$ Coxeter complex in this language.

%\eran{missing 2 added in thm 3.1:}

\begin{theorem}[Foata-Sch\"utzenberger]\cite[Th\'eor\`eme 5.6]{FS}
The $h$-polynomial of the type $A_{n-1}$ Coxeter complex can be expressed as follows:
\[ h(A_{n-1}; t) = \sum_{w \in \Pk_n} t^{\des(w)} (1+t)^{n-1-2\des(w)}.\]
\end{theorem}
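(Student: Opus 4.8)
The plan is to derive the identity straight from the combinatorial formula
\[ h(A_{n-1};t) = \sum_{w \in \Sn_n} t^{\des(w)} \]
for the Eulerian polynomial, by partitioning $\Sn_n$ into blocks indexed by the elements of $\Pk_n$ so that the block indexed by $w \in \Pk_n$ contributes exactly $t^{\des(w)}(1+t)^{n-1-2\des(w)}$. The partition will be the orbit decomposition of the \emph{(modified) Foata--Strehl action} (``valley hopping'') of $(\Z/2\Z)^n$ on $\Sn_n$.

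Concretely, I would first pad each $w \in \Sn_n$ to $\hat w = w_0 w_1 \cdots w_n w_{n+1}$ with $w_0 = w_{n+1} = 0$, and classify each letter $x \in [n]$, according to the pair of slopes of $\hat w$ on either side of it, as a \emph{peak}, a \emph{valley}, a \emph{double ascent}, or a \emph{double descent}. For each $x$ I would then define an involution $\varphi_x$ of $\Sn_n$: fix $w$ if $x$ is a peak or valley of $\hat w$; if $x$ is a double ascent, slide $x$ rightward past the maximal block of entries that immediately follow it and exceed it; if $x$ is a double descent, perform the mirror-image move to the left. The core of the argument is to verify the basic properties of this action: that $\varphi_x$ is well defined with $\varphi_x^2 = \mathrm{id}$; that the $\varphi_x$ pairwise commute; that applying $\varphi_x$ changes neither the relative order of the letters different from $x$ nor the peak/valley classification of any letter; and that $\varphi_x$ toggles $x$ between double ascent and double descent while changing $\des(w)$ by exactly $+1$ (ascent $\to$ descent) or $-1$ (descent $\to$ ascent).

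Granting these properties, $G = \langle \varphi_x : x \in [n] \rangle \cong (\Z/2\Z)^n$ acts on $\Sn_n$, and on any orbit $\mathcal{O}$ the set of ``free'' letters — those that are double ascents or double descents — is constant; say there are $m$ of them. Since each free letter may be toggled independently and each toggle from a double ascent to a double descent raises $\des$ by one, we get $\sum_{w \in \mathcal{O}} t^{\des(w)} = t^{\des(w^{*})}(1+t)^{m}$, where $w^{*}$ is the unique member of $\mathcal{O}$ for which $\hat{w^{*}}$ has no double descent. Reading off this last condition — at $i = n$ it forces $w^{*}_{n-1} < w^{*}_{n}$, and at the interior positions it forbids $w^{*}_{i-1} > w^{*}_{i} > w^{*}_{i+1}$ — shows precisely that $w^{*} \in \Pk_n$, so the orbits of $G$ are in bijection with $\Pk_n$. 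Finally, because $\hat{w^{*}}$ has no double descent, each of its $\des(w^{*})+1$ descents is a maximal decreasing run of length one, giving rise to one peak; since $\hat{w^{*}}$ begins with an ascent and ends with a descent it thus has $\des(w^{*})+1$ peaks and $\des(w^{*})$ valleys among its $n$ interior letters, leaving $m = n - 1 - 2\des(w^{*})$ free letters. Summing the orbit contributions over $w^{*} \in \Pk_n$ yields the asserted formula.

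The last paragraph's counting and the ``nothing else moves'' bookkeeping are routine; the step that genuinely requires care — and is really the whole point — is checking that the $\varphi_x$ are commuting involutions that preserve the peak/valley pattern and change $\des$ by exactly $\pm 1$. (This is exactly the content of the Foata--Strehl action; a less transparent alternative would be to induct on $n$, matching the classical recursion for the Eulerian polynomial against a parallel recursion for $\sum_{w \in \Pk_n} t^{\des(w)}$.)
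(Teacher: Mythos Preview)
The paper does not supply its own proof of this theorem: it is quoted as a known result, with attribution to \cite[Th\'eor\`eme 5.6]{FS} and pointers to \cite{SWG}, \cite{Br}, and \cite{Stem}. So there is no in-paper argument to compare against.

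That said, your proposal is correct and is precisely the standard modern proof. The Foata--Strehl ``valley-hopping'' action you describe is well defined, the $\varphi_x$ do commute (the key observation being that the block of larger entries hopped over by $x$ is unaffected by moving any $y\neq x$), each $\varphi_x$ preserves the peak/valley set and toggles a single double ascent/double descent, and the orbit representatives with no double descents in $\hat w$ are exactly $\Pk_n$. Your count $m=n-1-2\des(w^{*})$ is right: in $\hat{w^{*}}$ one has $\des(w^{*})+1$ peaks and $\des(w^{*})$ valleys among the $n$ interior letters. This is exactly the argument Foata--Sch\"utzenberger, Shapiro--Woan--Getu, and later Br\"and\'en and Postnikov--Reiner--Williams use (the last reference, \cite[\S 11]{PRW}, spells out the valley-hopping picture in essentially the language you chose), so your write-up would serve as a faithful expansion of what the paper merely cites.
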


We now can state precisely that the type $A_{n-1}$ Coxeter complex (permutahedron) has $\gamma(A_{n-1}) =$ $(\gamma_0, \gamma_1,\ldots, \gamma_{\lfloor \frac{n-1}{2} \rfloor})$, where
\[ \gamma_i(A_{n-1}) = | \{ w \in \Pk_n : \des(w) = i\} |.\]

The permutahedron is an example of a \emph{chordal nestohedron}. Following \cite{PRW}, a chordal nestohedron $P_{\B}$ is characterized by its \emph{building set}, $\B$. Each building set $\B$ on $[n]$ has associated to it a set of \emph{$\B$-permutations}, $\Sn_n(\B) \subset \Sn_n$, and we similarly define $\Pk_n(\B) = \Sn_n(\B) \cap \Pk_n$. See \cite{PRW} for details. The following is a main result of Postnikov, Reiner, and Williams \cite{PRW}.

\begin{theorem}[Postnikov, Reiner, Williams] \label{thm:PRW}
\cite[Theorem 11.6]{PRW} If $\B$ is a connected chordal building set on $[n]$, then
\[ h(P_{\B}; t) = \sum_{w \in \Pk_n(\B)} t^{\des(w)} (1+t)^{n-1-2\des(w)}.\]
\end{theorem}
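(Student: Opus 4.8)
We prove this by restricting the ``valley-hopping'' (modified Foata--Strehl) involutions to the set $\Sn_n(\B)$ of $\B$-permutations, following the strategy of \cite{PRW}. The point of departure is the combinatorial description of the $h$-vector of a chordal nestohedron as an ordinary descent generating function: for a connected chordal building set $\B$,
\[ h(P_\B;t)=\sum_{w\in\Sn_n(\B)}t^{\des(w)}. \]
Chordality enters here in two guises at once --- it is what makes $\B$-permutations a valid index set for the vertices of $P_\B$, and what makes the natural ``$\B$-descent'' statistic on them coincide with the usual number of descents. Granting this, Theorem~\ref{thm:PRW} is equivalent to the purely combinatorial identity
\[ \sum_{w\in\Sn_n(\B)}t^{\des(w)}=\sum_{w\in\Pk_n(\B)}t^{\des(w)}(1+t)^{n-1-2\des(w)}. \]

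To prove the identity I would first recall how it goes in the classical case, where $\B$ is the building set of all nonempty subsets of $[n]$, so that $P_\B$ is the permutahedron, $\Sn_n(\B)=\Sn_n$, and $\Pk_n(\B)=\Pk_n$; this is the Foata--Strehl / Shapiro--Woan--Getu / Br\"and\'en argument \cite{Br}. Pad $w$ to $0\,w_1\cdots w_n\,0$ and call each position $i\in[n]$ a peak, valley, double ascent, or double descent according to how $w_i$ compares with its two neighbours. For each value $x\in[n]$ there is an involution $\varphi_x$ of $\Sn_n$ that hops $x$ across the larger values flanking it --- turning a double ascent at $x$ into a double descent and vice versa --- when $x$ lies at a double ascent or a double descent, and fixes $w$ otherwise; the $\varphi_x$ commute and generate a $\Z_2^{[n]}$-action. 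Each orbit has a unique element $w_0$ whose padded word has no double descent, and that is precisely the condition $w_0\in\Pk_n$. Since the orbit of $w_0$ is freely parametrized by choosing, for each of its $a$ ``movable'' values, which side of its flanking larger elements it occupies --- with $w_0$ the all-ascents choice and each switch to the descent side raising $\des$ by one --- the orbit contributes $\sum_{k}\binom{a}{k}t^{\des(w_0)+k}=t^{\des(w_0)}(1+t)^{a}$ to $\sum_{w\in\Sn_n}t^{\des(w)}$, and a short count (the padded word of $w_0$ has $\des(w_0)+1$ peaks and $\des(w_0)$ valleys) gives $a=n-1-2\des(w_0)$. Summing over orbits proves the identity in the classical case.

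The crux of the general case --- and the step I expect to be the main obstacle --- is to verify that the $\Z_2^{[n]}$-action restricts to $\Sn_n(\B)$ whenever $\B$ is chordal: $w\in\Sn_n(\B)$ implies $\varphi_x(w)\in\Sn_n(\B)$ for all $x$. This forces one to engage directly with the definition of $\B$-permutations and with chordality; informally, chordality is exactly the closure property of $\B$ under which the local reshuffling carried out by $\varphi_x$ --- moving a single value past its flanking larger elements --- cannot violate $\B$-admissibility at any position, and it is instructive (and probably necessary, in order to locate where the hypothesis is used) to exhibit a non-chordal $\B$ for which $\Sn_n(\B)$ fails to be closed. Once closure is established, $\Sn_n(\B)$ is a disjoint union of $\Z_2^{[n]}$-orbits; each orbit contains a unique permutation lying in $\Pk_n$, and that permutation automatically lies in $\Pk_n(\B)=\Sn_n(\B)\cap\Pk_n$, so the orbits of the restricted action correspond bijectively to $\Pk_n(\B)$. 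The orbit-by-orbit computation of the classical case now applies verbatim, giving
\[ h(P_\B;t)=\sum_{w\in\Sn_n(\B)}t^{\des(w)}=\sum_{w_0\in\Pk_n(\B)}t^{\des(w_0)}(1+t)^{\,n-1-2\des(w_0)}, \]
which is the assertion of the theorem.
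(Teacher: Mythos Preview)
The paper does not prove this theorem; it is quoted as a result of Postnikov, Reiner, and Williams \cite[Theorem 11.6]{PRW} and used as a black box. So there is no ``paper's own proof'' to compare against here.

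That said, your sketch is faithful to the actual argument in \cite{PRW}: one starts from the descent generating function $h(P_{\B};t)=\sum_{w\in\Sn_n(\B)}t^{\des(w)}$ (valid for connected chordal $\B$), shows that the commuting valley-hopping involutions $\varphi_x$ preserve $\Sn_n(\B)$, and then reads off the $\gamma$-expansion orbit by orbit exactly as in the classical Foata--Strehl case. You correctly identify the closure of $\Sn_n(\B)$ under the $\varphi_x$ as the crux, and you are right that this is precisely where chordality is used. What remains a genuine gap in your write-up is that you do not actually carry out this closure verification: you flag it as ``the main obstacle'' and then assume it. To turn your outline into a proof you would need to unpack the definition of $\B$-permutation (in terms of $\B$-trees, or equivalently the condition that each $w_i$ is maximal in some $B\in\B$ contained in $\{w_i,w_{i+1},\ldots,w_n\}$ and containing $w_{i+1}$) and check that hopping a double-ascent or double-descent value across its flanking larger block preserves that condition; this is \cite[Lemma 11.7]{PRW}, and the argument genuinely uses the chordal hypothesis that $[i,j]\in\B$ whenever $\{i,j\}\subseteq B$ for some $B\in\B$.
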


Thus, for a chordal nestohedron, $\gamma_i(P_{\B}) = |\{  w \in \Pk_n(\B) : \des(w) = i\} |$.

\subsection{Type B Coxeter complex}\label{sec:B}

We now turn our attention to the type $B_n$ Coxeter complex. The framework of \cite{PRW} no longer applies, so we must discuss a new, if similar, combinatorial model.

In type $B_n$, the $\gamma$-vector is given by $\gamma_i$ = $4^i$ times the number of permutations $w$ of $\Sn_n$ such that $\pk(0w)=i$. See Petersen \cite{Pet} and Stembridge \cite{Stem}. We define the set of \emph{decorated permutations} $\Des_n$ as follows. A decorated permutation $\w \in \Des_n$ is a permutation $w \in \Sn_n$ with bars following the peak positions (with $w_0 =0$). Moreover these bars come in four colors: $\{ |=|^0, |^1, |^2, |^3 \}$. Thus for each $w \in \Sn_n$ we have $4^{\pk(0w)}$ decorated permutations in $\Des_n$. For example, $\Des_9$ includes elements such as \[4|238|^1 76519, \quad 4|^3 238 |^2 76519, \quad 25|137 |^1 69 |^2 84 .\] (Note that $\Pk_n \subset \Des_n$.) Let $\pk(\w) = \pk(0w)$ denote the number of bars in $\w$. In this context we have the following result.

\begin{theorem}[Petersen]\cite[Proposition 4.15]{Pet}
The $h$-polynomial of the type $B_n$ Coxeter complex can be expressed as follows:
\[ h(B_n;t) = \sum_{\w \in \Des_n} t^{\pk(\w)}(1+t)^{n-2\pk(\w)}.\]
\end{theorem}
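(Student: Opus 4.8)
The plan is to reduce the identity to the known formula for the $\gamma$-coefficients of the type $B_n$ Eulerian polynomial and then to check that the set $\Des_n$ has been set up precisely to enumerate those coefficients. Recall that $h(B_n;t)$ is the type $B_n$ Eulerian polynomial $\sum_w t^{\des_B(w)}$, the sum ranging over all $2^n n!$ signed permutations $w$ of $[n]$, where $\des_B(w)$ is the number of positions $i\in\{0,1,\dots,n-1\}$ with $w_i>w_{i+1}$ and $w_0=0$. As recalled above, the results of Petersen \cite{Pet} and Stembridge \cite{Stem} supply the $\gamma$-expansion
\[ h(B_n;t)=\sum_{0\le i\le n/2}\gamma_i(B_n)\,t^i(1+t)^{n-2i},\qquad \gamma_i(B_n)=4^i\,|\{\,w\in\Sn_n:\pk(0w)=i\,\}|. \]
The first step is simply to invoke this.

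The second step is bookkeeping. By definition, a decorated permutation $\w\in\Des_n$ is an ordinary permutation $w\in\Sn_n$ together with a choice of one of four colors on each of the $\pk(0w)$ bars placed after the peak positions of $0w$; hence every $w$ with $\pk(0w)=i$ accounts for exactly $4^i$ elements of $\Des_n$, all of them with $\pk(\w)=i$. Collecting the claimed right-hand side according to the underlying permutation,
\begin{align*}
\sum_{\w\in\Des_n}t^{\pk(\w)}(1+t)^{n-2\pk(\w)} &=\sum_{w\in\Sn_n}4^{\pk(0w)}\,t^{\pk(0w)}(1+t)^{n-2\pk(0w)}\\
&=\sum_{0\le i\le n/2}\gamma_i(B_n)\,t^i(1+t)^{n-2i}=h(B_n;t),
\end{align*}
the last two equalities being the definition of $\gamma_i(B_n)$ and the $\gamma$-expansion above. (As a check, putting $t=1$ gives $\sum_{w\in\Sn_n}4^{\pk(0w)}2^{n-2\pk(0w)}=2^n n!=h(B_n;1)$.) In this sense the theorem is a repackaging of the cited formula for $\gamma_i(B_n)$, and once that formula is taken as known there is nothing left to prove; the only thing to verify is the elementary count of decorated permutations lying over a given $w$, which is immediate from the definition of $\Des_n$.

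If one prefers a self-contained argument that does not quote the $\gamma$-formula as a black box, the natural route is to prove it directly by a valley-hopping group action on the set of signed permutations, in the spirit of the Foata--Strehl action used in type $A$ (see \cite{FS,Br,Stem}). One would look for an action of a group $(\Z/2)^m$ on the signed permutations whose orbits are indexed by $\Des_n$ and over each of which $\sum t^{\des_B}$ equals $t^{\pk(\w)}(1+t)^{n-2\pk(\w)}$: hopping the $n-2\pk(0w)$ letters of $0w$ that are neither peaks nor valleys should produce the free binary choices recorded by the factor $(1+t)^{n-2\pk}$, exactly as in type $A$, while each of the $\pk(0w)$ peak letters, which cannot be hopped, should carry two further binary choices recording the peak configuration (including sign data) -- and these are precisely the four bar-colors. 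I expect the main obstacle to be pinning down the action at peaks (the four-fold choice) together with getting it right at the two ends of the word: the artificial entry $w_0=0$ behaves like $-\infty$ for comparisons but genuinely interacts with $w_1$, and $w_n$ has no right neighbor, so one must check case by case that every hopping move changes $\des_B$ by exactly $\pm1$, that the peaks of $0w$ are exactly the orbit-invariant obstructions to hopping, and that the colored representatives are exactly the fixed points of the action. That boundary analysis, rather than the bulk combinatorics, is where the real work lies; with it in place the $\gamma$-coefficient formula and hence the stated identity both follow.
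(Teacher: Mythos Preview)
Your proposal is correct and matches the paper's treatment exactly: the paper does not prove this theorem but cites it from \cite{Pet}, having first recalled that $\gamma_i(B_n)=4^i\cdot|\{w\in\Sn_n:\pk(0w)=i\}|$ and then defined $\Des_n$ so that the stated identity is an immediate repackaging of that formula. Your second paragraph sketching a self-contained valley-hopping argument is a reasonable addition but goes beyond what the paper does.
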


Thus, \[ \gamma_i(B_n) = | \{ \w \in \Des_n : \pk(\w) = i\} |.\]

\subsection{Type D Coxeter complex}\label{sec:D}

We now describe how to view the elements enumerated by the $\gamma$-vector of the type D Coxeter complex in terms of a subset of  decorated permutations. Define a subset $\Des_n^D \subset \Des_n$ as follows:
\begin{align*}
 \Des_n^D = \{ \w = w_1 \cdots |^{c_1} w_{i_1} \cdots &|^{c_2} \cdots \in \Des_n \mbox{ such that } w_1 < w_2 < w_3, \mbox{ or, } \\
& \mbox{ both } \max\{ w_1, w_2, w_3\} \neq w_3 \mbox{ and } c_1 \in \{ 0,1\}
 \}.
\end{align*}
In other words, we remove from $\Des_n$ all elements whose underlying permutations have $w_2 < w_1 < w_3$, then for what remains we dictate that bars in the first or second positions can only come in one of two colors. Stembridge \cite{Stem} gives an expression for the $h$-polynomial of the type $D_n$ Coxeter complex, which we now phrase in the following manner.

\begin{theorem}[Stembridge]\label{thm:stem} \cite[Corollary A.5]{Stem}.
The $h$-polynomial of the type $D_n$ Coxeter complex can be expressed as follows:
\[ h(D_n;t) = \sum_{\w \in \Des_n^D} t^{\pk(\w)}(1+t)^{n-2\pk(\w)}.\]
\end{theorem}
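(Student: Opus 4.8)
The plan is to reduce Theorem~\ref{thm:stem} to Petersen's type $B$ expansion together with the classical identity relating the Eulerian polynomials of types $B_n$, $D_n$ and $A_{n-2}$. First I would recall (or derive from the exponential generating functions of the type $B$ and type $D$ Eulerian polynomials; this can also be read off from Stembridge's appendix \cite{Stem}) the identity
\[ h(B_n;t) \;=\; h(D_n;t) \;+\; n\,2^{\,n-1}\,t\,h(A_{n-2};t), \qquad n\ge 2. \]
Expanding $h(A_{n-2};t)$ over $\Pk_{n-1}$ via Foata--Sch\"utzenberger and $h(B_n;t)$ over $\Des_n$ via Petersen, and using $\Des_n^D\subseteq\Des_n$, the assertion of Theorem~\ref{thm:stem} becomes equivalent to the single polynomial identity
\[ \sum_{\w\in\Des_n\setminus\Des_n^D} t^{\pk(\w)}(1+t)^{n-2\pk(\w)} \;=\; n\,2^{\,n-1}\sum_{u\in\Pk_{n-1}} t^{\,\des(u)+1}(1+t)^{\,n-2(\des(u)+1)}. \]

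Next I would analyze the set $\Des_n\setminus\Des_n^D$ directly from the definition of $\Des_n^D$. A decorated permutation $\w$ fails to lie in $\Des_n^D$ precisely when its first peak is at position $1$ or $2$ and, moreover, either its underlying permutation satisfies $w_2<w_1<w_3$ (with the first bar in any of the four colours) or $w_2<w_1<w_3$ fails but the first bar carries one of the two colours $|^2,|^3$. In either case one may strip off the initial increasing run $w_1<\cdots<w_p$ up to the first peak $p\in\{1,2\}$ together with the first bar; the remaining tail $w_{p+1}\cdots w_n$ carries all the other bars, satisfies $w_p>w_{p+1}$, and has $\pk(\w)=1+(\text{number of bars in the tail})$. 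The goal is then to turn this stripping procedure into a bijection
\[ \Des_n\setminus\Des_n^D \;\longleftrightarrow\; [n]\times\{0,1\}^{\,n-1}\times\Pk_{n-1} \]
matching $\pk(\w)$ with $\des(u)+1$: the factor $[n]\times\{0,1\}^{\,n-1}$ should absorb the choice of which letters form the initial run, the colour of the first bar (four options in the $w_2<w_1<w_3$ case, two otherwise), and the data needed to promote the tail to an unconstrained element $u$ of $\Pk_{n-1}$ by Foata--Sch\"utzenberger-style valley hopping. Summing $t^{\pk(\w)}(1+t)^{n-2\pk(\w)}$ over this product set then reproduces the right-hand side of the displayed identity.

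The hard part will be this last step: making the stripping/valley-hopping correspondence precise so that the tail becomes a genuinely \emph{unconstrained} element of $\Pk_{n-1}$ and so that the two colours $|^2,|^3$ together with the $w_2<w_1<w_3$ versus $w_2\not<w_1<w_3$ dichotomy account exactly for the ``sign'' factor $2^{n-1}$ and the ``position'' factor $n$, with the peak count transported correctly throughout; the bookkeeping genuinely differs according to whether the first peak sits at position $1$ or $2$. The tiny cases should be checked by hand as a safeguard---$D_3\cong A_3$, where the formula holds, and the degenerate $D_2\cong A_1\times A_1$, where $\Des_2^D$ is really only defined once one fixes a convention for the missing $w_3$. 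Two alternative routes, should the bijection prove stubborn, are: (i) compare $|\{\w\in\Des_n^D:\pk(\w)=i\}|$ term by term with Stembridge's closed-form expression for $\gamma_i(D_n)$, trading the bijection for a binomial-coefficient identity; or (ii) realize $\Des_n^D$ directly as a set of orbit representatives for a type-$D$ valley-hopping action on the elements of $W(D_n)$ graded by descents, which amounts to re-deriving \cite[Corollary A.5]{Stem}.
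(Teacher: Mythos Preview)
The paper does not prove Theorem~\ref{thm:stem}; it simply attributes the result to Stembridge \cite[Corollary~A.5]{Stem} and rephrases his formula in the language of the decorated permutations $\Des_n^D$. There is therefore no ``paper's own proof'' to compare your attempt against.

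As for your proposal itself: the reduction strategy is sound. The identity $h(B_n;t)=h(D_n;t)+n\,2^{n-1}t\,h(A_{n-2};t)$ is correct (and checks for small $n$), and combining it with the known $\gamma$-expansions for $B_n$ and $A_{n-2}$ does reduce the theorem to the displayed identity about $\Des_n\setminus\Des_n^D$. Your case analysis of which $\w$ lie outside $\Des_n^D$ is also accurate. However, the proposal is not a proof: the crucial bijection between $\Des_n\setminus\Des_n^D$ and $[n]\times\{0,1\}^{n-1}\times\Pk_{n-1}$ is only asserted to exist, with its construction deferred to ``stripping/valley-hopping'' that you yourself flag as the hard part and do not carry out. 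In particular, it is not clear how the factor $[n]\times\{0,1\}^{n-1}$ is to absorb simultaneously the position of the first peak, the four-versus-two colour choices, and the reindexing of the tail into $\Pk_{n-1}$ while preserving $\pk(\w)=\des(u)+1$; this bookkeeping is genuinely delicate and different in the $p=1$ and $p=2$ cases, as you note. Your alternative~(i), comparing directly with Stembridge's closed form for $\gamma_i(D_n)$, is likely the cleaner route and is essentially what the paper relies on by citation.
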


Thus, \[ \gamma_i(D_n) = | \{ \w \in \Des^D_n : \pk(\w) = i\} |.\]

\subsection{The associahedron}\label{sec:ass}

The associahedron $\Ass_n$ is an example of a chordal nestohedron, so Theorem \ref{thm:PRW} applies. Following \cite[Section 10.2]{PRW}, the $\B$-permutations of $\Ass_n$ are precisely the $312$-avoiding permutations. Let $\Sn_n(312)$ denote the set of all $w \in \Sn_n$ such that there is no triple $i < j < k$ with  $w_j < w_k < w_i$. Then we have: \[ h(\Ass_n;t) = \sum_{ w \in \Pk_n(312)} t^{\des(w)} (1+t)^{n-1-2\des(w)},\] where $\Pk_n(312) = \Sn_n(312) \cap \Pk_n$. Hence,  \[ \gamma_i(\Ass_n) = | \{ w \in \Pk_n(312) : \des(w) = i\} |.\]

\subsection{The cyclohedron}

The cyclohedron $\Cyc_n$, or type B associahedron, is a nestohedron, though not a chordal nestohedron and hence Theorem \ref{thm:PRW} does not apply. Its $\gamma$-vector can be explicitly computed from its $h$-vector as described in \cite[Proposition 11.15]{PRW}. We have $\gamma_i(\Cyc_n) = \binom{n}{i,i,n-2i}$. Define \[ P_n = \{ (L,R) \subseteq [n] \times [n] : |L|=|R|, L \cap R = \emptyset\}.\] It is helpful to think of elements of $P_n$ as follows. For $\sigma = (L, R)$ with $|L|=|R| = k$, write $\sigma$ as a $k\times 2$ array with the elements of $L$ written in increasing order in the first column, the elements of $R$ in increasing order in the second column. That is, if $L = \{ l_1 < \cdots < l_k\}$ and $R = \{ r_1 < \cdots < r_k\}$, we write \[ \sigma = \left( \begin{array}{c c}
           l_1 & r_1 \\
           \vdots & \vdots \\
           l_k & r_k \end{array} \right). \]
For $\sigma \in P_n$, let $\rho(\sigma) = |L|=|R|$. Then we can write
\[ h(\Cyc_n;t) = \sum_{\sigma \in P_n} t^{\rho(\sigma)} (1+t)^{n-2\rho(\sigma)}.\]

Thus, \[ \gamma_i(\Cyc_n) = | \{ \sigma \in P_n : \rho(\sigma) = i\} |.\]

\section{The $\Gamma$-complexes}\label{sec:Gam}

We will now describe simplicial complexes whose $f$-vectors are the $\gamma$-vectors described in Section \ref{sec:coxnest}.

%First we introduce the complex $\Gamma(\Des_n)$, whose faces are in bijection with the elements of $\Des_n$, such that if $F \leftrightarrow \w$, then $\dim F = \pk(\w)-1$. 

%\kyle{I like the notation $\Gamma(*)$ since it has connotations both with the (lowercase) $\gamma$-vector we're modeling and with the usual use of $\Gamma$ to denote a graph - and these complexes are clique complexes}

\subsection{Coxeter complexes}

Notice that if \[ \w = w_1 |^{c_1} \cdots |^{c_{i-1}} w_i |^{c_i} w_{i+1} |^{c_{i+1}} \cdots |^{c_{l-1}} w_l, \] is a decorated permutation, then each word $w_i = w_{i,1} \ldots w_{i,k}$ has some $j$ such that: \[ w_{i,1} > w_{i,2} > \cdots > w_{i,j} > w_{i,j+1} <  w_{i,j+2} < \cdots < w_{i,k}.\] We say $w_i$ is a \emph{down-up word}. We call $\wdn_i= w_{i,1} \cdots w_{i,j}$ the \emph{decreasing part} of $w_i$ and $\wu_i = w_{i,j+1} \cdots w_{i,k}$ the \emph{increasing part} of $w_i$. Note that the decreasing part may be empty, whereas the increasing part is nonempty if $i \neq l$. Also, the rightmost block of $\w$ may be strictly decreasing (in which case $w_l=\wdn_l$) and the leftmost block is always increasing, even if it is a singleton.

Define the vertex set \[ V_{\Des_n}:= \{ \ve \in \Des_n : \pk(\ve) = 1\}.\] The adjacency of two such vertices is defined as follows. Let \[ \ue = \uu_1 |^c \ud_2 \uu_2\] and \[ \ve = \vu_1 |^d \vd_2 \vu_2\] be two vertices with $|\uu_1| < |\vu_1|$. We define $\ue$ and $\ve$ to be adjacent if and only if there is an element $\w \in \Des_n$ such that \[ \w = \uu_1 |^c \ud_2 \acute{a} |^d \vd_2 \vu_2,\] where $\acute{a}$ is the letters of $\uu_2 \cap \vu_1$ written in increasing order. Such an element $\w$ exists if, as sets: 
\begin{itemize}
\item $\uu_1\cup \ud_2 \subset \vu_1$ ($\Leftrightarrow \vd_2\cup \vu_2 \subset \uu_2$),
\item $\min \uu_2 \cap \vu_1 < \min \ud_2$, and
\item $\max \uu_2 \cap \vu_1 > \max \vd_2$.  
(Note that $\uu_2 \cap \vu_1$ is nonempty by the first condition.)
\end{itemize}

\begin{definition}
Let $\Gamma(\Des_n)$ be the collection of all subsets $F$ of $V_{\Des_n}$ such that every two distinct vertices in $F$ are adjacent.
\end{definition}

Note that by definition $\Gamma(\Des_n)$ is a flag complex. It remains to show that the faces of $\Gamma(\Des_n)$ correspond to decorated permutations.

Let $\phi: \Des_n \to \Gamma(\Des_n)$ be the map defined as follows. If
 \[ \w = w_1 |^{c_1} \cdots |^{c_{i-1}} w_i |^{c_i} w_{i+1} |^{c_{i+1}} \cdots |^{c_{l-1}} w_l, \] then
 \[ \phi(\w) = \{ w_1|^{c_1} \wdn_2 \acute{b}_1, \ldots, \acute{a}_i|^{c_i} \wdn_{i+1} \acute{b}_i, \ldots, \acute{a}_{l-1}|^{c_{l-1}} \wdn_l \acute{b}_{l-1}\},\]
where $\acute{a}_i$ is the set of letters to the left of $\wdn_{i+1}$ in $\w$ written in increasing order and $\acute{b}_i$ is the set of letters to the right of $\wdn_{i+1}$ in $\w$ written in increasing order.

\begin{proposition}\label{prop:SetBijection}
The map $\phi$ is a bijection between faces of $\Gamma(\Des_n)$ and decorated permutations in $\Des_n$.
\end{proposition}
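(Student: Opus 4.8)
The plan is to show that $\phi$ is well-defined, injective, and surjective onto the face poset of $\Gamma(\Des_n)$, with the real content concentrated in the first and last parts. To begin, I would verify that $\phi$ lands in $\Gamma(\Des_n)$: given $\w \in \Des_n$, I must check that each listed element $\acute{a}_i|^{c_i}\wdn_{i+1}\acute{b}_i$ is actually a valid vertex in $V_{\Des_n}$, i.e.\ lies in $\Des_n$ and has exactly one peak. The single-peak condition should be immediate from the fact that $\acute{a}_i$ is increasing, $\wdn_{i+1}$ is decreasing, $\acute{b}_i$ is increasing, and the bar $|^{c_i}$ sits at the unique peak position formed by the junction of the (nonempty, since it ends with $\wdn_{i+1}$ or more) increasing-then-decreasing prefix with the increasing suffix; here one must be slightly careful about the color constraints inherited from $\Des_n$ versus $\Des_n^D$, but for the plain $\Des_n$ case the colors are unconstrained. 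Then, for any two elements of $\phi(\w)$, say the ones indexed $i < j$, I would check the three bulleted adjacency conditions directly from the structure of $\w$: the "nesting" $\acute{a}_i \cup \wdn_{i+1} \subset$ (increasing part of the $j$-th vertex) follows because everything to the left of $\wdn_{j+1}$ in $\w$ contains everything to the left of $\wdn_{i+1}$ together with $\wdn_{i+1}$ itself; and the two $\min/\max$ conditions come from reading off which letters of $\w$ lie strictly between the decreasing runs $\wdn_{i+1}$ and $\wdn_{j+1}$. This establishes that $\phi(\w)$ is a face.

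Next I would construct the inverse map and prove bijectivity. The idea is that a face $F = \{\ve^{(1)}, \ldots, \ve^{(m)}\}$ of $\Gamma(\Des_n)$ can be totally ordered by the length of the increasing prefix $|\vu^{(1)}_1| < |\vu^{(2)}_1| < \cdots < |\vu^{(m)}_1|$ (strictness because distinct vertices forming an edge were required to have distinct prefix lengths, and the nesting condition forces a chain). Given such an ordered face, I would reconstruct a decorated permutation by "stitching together" the decreasing runs $\vd^{(1)}_2, \vd^{(2)}_2, \ldots$ in order, interleaving the increasing material dictated by the nesting, and placing the bars with their recorded colors; one checks this is forced and produces a well-defined element $\psi(F) \in \Des_n$. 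The key claim is then that $\phi \circ \psi = \mathrm{id}$ and $\psi \circ \phi = \mathrm{id}$. For $\psi \circ \phi = \mathrm{id}$: starting from $\w$, the face $\phi(\w)$ records exactly the decreasing runs $\wdn_2, \ldots, \wdn_l$ (in order) together with the bar colors $c_1, \ldots, c_{l-1}$ and enough positional data (via the increasing $\acute{a}_i, \acute{b}_i$) to recover where each letter of $\w$ sits; reassembling recovers $\w$. For $\phi \circ \psi = \mathrm{id}$: one shows that the decreasing run of the $i$-th vertex of $\psi(F)$ (in the prefix-length order) is exactly $\vd^{(i)}_2$, that the bar color is $d^{(i)}$, and that the surrounding increasing letters come out sorted as required — this is where the two $\min/\max$ adjacency conditions do their work, guaranteeing that when the runs are stitched the letters of $\vu^{(i)}_2 \cap \vu^{(i+1)}_1$ really do land between $\vd^{(i)}_2$ and $\vd^{(i+1)}_2$ in the global order.

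The main obstacle, I expect, is the surjectivity/well-definedness of $\psi$ on \emph{all} faces at once, i.e.\ showing that the pairwise adjacency data of a face $F$ is globally consistent: flagness of $\Gamma(\Des_n)$ guarantees every pair is adjacent, but one must argue that the local stitching instructions coming from consecutive pairs $(\ve^{(i)}, \ve^{(i+1)})$ actually glue into a single decorated permutation, and that the non-consecutive adjacencies (which are automatic) impose no further obstruction. Concretely, one needs that the letter sets partition correctly — that $\vu^{(i)}_1 = \vu^{(1)}_1 \sqcup \vd^{(2)}_2 \sqcup \cdots \sqcup \vd^{(i)}_2 \sqcup (\text{some increasing tail})$ and dually for the decreasing side — so that no letter is used twice and none is omitted. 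I would handle this by induction on $m = |F|$: assume the first $i$ vertices stitch into a decorated permutation $\w^{(i)}$ with the correct "frontier," then use the adjacency of $\ve^{(i)}$ and $\ve^{(i+1)}$ (the nesting condition controls the letter bookkeeping, the $\min/\max$ conditions control the insertion position) to append $\vd^{(i+1)}_2$ and its bar, producing $\w^{(i+1)}$; one must also check this doesn't disturb compatibility with vertices $\ve^{(j)}$ for $j \le i$, which again reduces to the nesting chain. Once this induction goes through, injectivity of $\phi$ is a formality (two faces with the same image would have to have the same ordered list of decreasing runs and colors, hence be equal), and the proposition follows. One should also remark that the same argument, with the color restrictions of $\Des_n^D$ carried along, will be needed for the type $D$ case treated later.
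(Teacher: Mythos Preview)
Your proposal is correct and follows essentially the same route as the paper's proof: verify well-definedness of $\phi$ via the three bulleted adjacency conditions, then construct the inverse by ordering the vertices of a face by bar position (equivalently, by increasing prefix length) and inductively stitching on one bar at a time using the adjacency of consecutive vertices. The paper is somewhat terser than you are---it checks injectivity directly rather than via $\psi\circ\phi=\mathrm{id}$, and it leaves the ``compatibility with earlier vertices'' you flag as implicit in the claim $\phi(\w')=F$---but the structure and content of the argument are the same.
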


\begin{proof}
%\eran{added well-definedness, and changed rest of proof. In particular, strengthened induction hypothesis for surjectiveness. Hope you'll find it clear.}
First, let let us check that $\phi$ is well defined, i.e. that $\phi(\w)\in \Gamma(\Des_n)$ for $\w \in \Des_n$. Indeed, it is easy to verify that the three bulleted conditions above hold for any two vertices in $\phi(\w)$. 

It is straightforward to see that $\phi$ is injective. 
Indeed, if $\phi(\w)=\phi(\ve)$ then $\w$ and $\ve$ have the like colored bars in the same positions. Further, their vertex with bar $|^{c_1}$ shows $w_1=v_1$ and $\wdn_2=\vd_2$. Therefore, their vertex with bar $|^{c_2}$  shows $\wu_2=\vu_2$ and $\wdn_3=\vd_3$, and inductively, $\w=\ve$.

%(Indeed, if $\w$ and $\ve$ are distinct yet have the like colored bars in the same positions, then there is some $k \in [n]$ such that there is a bar between where $k$ appears in $\w$ and where $k$ appears in $\ve$. This results in a different vertex set.)

To see that $\phi$ is surjective, we will construct the inverse map. Clearly if $|F|\leq 2$, there is an element of $\Des_n$ corresponding to $F$. Now, given any $F \in \Gamma(\Des_n)$, order its vertices  by increasing position of the bar in the vertex: $\ue_1, \ue_2,\ldots, \ue_l$. Suppose by induction on $|F|$ that the face $\{ \ue_1, \ldots, \ue_{l-1}\}$ corresponds to the decorated permutation \[ \w = w_1 |^{c_1} \cdots |^{c_{i-1}} w_i |^{c_i} w_{i+1} |^{c_{i+1}} \cdots |^{c_{l-1}} \wdn_l \wu_l,\] so that  
$\ue_{l-1} = \uu_{l-1}|^{c_{l-1}} \wdn_l \wu_l$.

Then since $\ue_{l-1}$ and $\ue_l = \uu_{l,1}|^{c_l}\ud_{l,2}\uu_{l,2}$ are adjacent, we know $\ud_{l,2} \cup \uu_{l,2} \subset \wu_l$, $\min \wu_l \cap \uu_{l,1} < \min \wdn_l$, and $\max \wu_l \cap \uu_{l,1} > \max \ud_{l,2}$. Then obviously the following is in fact a decorated permutation in $\Des_n$: \[ \w'= w_1 |^{c_1} \cdots |^{c_{i-1}} w_i |^{c_i} w_{i+1} |^{c_{i+1}} \cdots |^{c_{l-1}} \wdn_l \acute{a} |^{c_l} \ud_{l,2} \uu_{l,2},\] where $\acute{a} = \wu_l \cap \uu_{l,1}$ written in increasing order. By construction, we have $\phi(\w') = F$, completing the proof.
\end{proof}

We now make explicit how to realize $\Des_n$ as the face poset of $\Gamma(\Des_n)$. We say $\w$ covers $\ue$ if and only if $\ue$ can be obtained from $\w$ by removing a bar $|^{c_i}$ and reordering the word $w_i w_{i+1} = \wdn_i\wu_i w_{i+1}$ as a down-up word $\wdn_i a$ where $a$ is the word formed by writing the letters of $\wu_i w_{i+1}$ in increasing order. Then $(\Des_n,\leq)$ is a poset graded by number of bars.

\begin{proposition}\label{prop:PosetBijection}
The map $\phi$ is an isomorphism of graded posets from  
$(\Des_n,\leq)$ to $(\Gamma(\Des_n),\subseteq)$.
\end{proposition}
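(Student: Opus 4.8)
The plan is to leverage Proposition~\ref{prop:SetBijection}, which already establishes that $\phi$ is a bijection between faces of $\Gamma(\Des_n)$ and elements of $\Des_n$. What remains is to check that $\phi$ and $\phi^{-1}$ are both order-preserving for the covering relations just defined, since both posets are graded by the number of bars and a bijection between graded posets that preserves covers in one direction (combined with the grading matching up) is automatically a poset isomorphism. So the first step is to observe that $\phi$ sends a decorated permutation with $k$ bars to a face with exactly $k$ vertices (immediate from the definition of $\phi$, since each bar $|^{c_i}$ of $\w$ contributes exactly one vertex $\acute{a}_i|^{c_i}\wdn_{i+1}\acute{b}_i$ to $\phi(\w)$), so $\phi$ respects the gradings.

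Next I would verify that $\phi$ preserves the covering relation. Suppose $\w$ covers $\ue$ in $(\Des_n,\le)$, meaning $\ue$ is obtained from $\w$ by deleting one bar $|^{c_i}$ and rewriting $w_iw_{i+1}$ as the down-up word $\wdn_i a$ with $a$ the increasing rearrangement of $\wu_iw_{i+1}$. I claim $\phi(\ue) = \phi(\w)\setminus\{\acute{a}_i|^{c_i}\wdn_{i+1}\acute{b}_i\}$, i.e.\ removing that one bar removes precisely the corresponding vertex and leaves every other vertex of $\phi(\w)$ unchanged. The point is that the vertex $\acute{a}_j|^{c_j}\wdn_{j+1}\acute{b}_j$ is determined by $\wdn_{j+1}$ together with the partition of the remaining letters into those left and right of $\wdn_{j+1}$; deleting the bar $|^{c_i}$ merges blocks $w_i$ and $w_{i+1}$ but does not move any other $\wdn_{j+1}$ nor change which letters lie to its left or right (the merged-and-reordered block still occupies the same set of positions). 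This requires a small case check for $j=i-1$ and $j=i+1$, the neighbors of the deleted bar, but in each case $\wdn_{i}$ and $\wdn_{i+2}$ and the left/right letter sets are unaffected. Hence $\phi(\ue)\subset\phi(\w)$ with $|\phi(\w)\setminus\phi(\ue)|=1$, so $\phi(\w)$ covers $\phi(\ue)$ in $(\Gamma(\Des_n),\subseteq)$.

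For the converse, suppose $G$ covers $F$ in $(\Gamma(\Des_n),\subseteq)$, so $G = F\cup\{\ve\}$ for some vertex $\ve\notin F$. Using the inverse map constructed in the proof of Proposition~\ref{prop:SetBijection}, write $\w = \phi^{-1}(G)$ and $\ue = \phi^{-1}(F)$; by the grading observation $\w$ has one more bar than $\ue$. Inserting the vertices of $G$ one at a time in order of bar position, the construction in that proof shows that $\ue$ is obtained from $\w$ by deleting exactly the bar contributed by $\ve$ and reordering the two adjacent blocks into a down-up word — which is precisely the covering relation in $(\Des_n,\le)$. Therefore $\phi^{-1}$ also preserves covers, and combined with the previous paragraph and the fact that a cover-preserving bijection between graded posets of the same rank is an isomorphism, we conclude $\phi$ is an isomorphism of graded posets.

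The main obstacle I anticipate is the bookkeeping in the second step: one must be careful that deleting a bar $|^{c_i}$ and reordering $w_iw_{i+1}$ genuinely leaves $\wdn_{i-1+1}=\wdn_i$ and $\wdn_{i+1+1}=\wdn_{i+2}$ intact and does not alter the left/right letter multisets defining the neighboring vertices — the subtlety being exactly at the boundary blocks adjacent to the deleted bar, and when one of $\wdn_i$, $\wdn_{i+2}$ is empty or when $i$ or $i+1$ is the leftmost or rightmost block. Once the invariance of the other vertices under a single bar-deletion is nailed down, the rest is formal.
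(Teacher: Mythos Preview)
Your overall strategy matches the paper's: use Proposition~\ref{prop:SetBijection} for bijectivity, observe that $\phi$ respects the grading, and then verify that both $\phi$ and $\phi^{-1}$ preserve covers. Your argument that $\phi$ preserves covers is essentially the paper's (deleting a bar leaves the data determining every other vertex of $\phi(\w)$ untouched).

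There is, however, a gap in your converse paragraph. You write that ``the construction in that proof shows that $\ue$ is obtained from $\w$ by deleting exactly the bar contributed by $\ve$,'' but the inverse construction in the proof of Proposition~\ref{prop:SetBijection} inserts bars \emph{in increasing order of position}. If $\ve$ has its bar somewhere in the middle of the ordered list of bars of $G$, then that construction never passes through $\phi^{-1}(F)$ at all, and you have not established that the result is independent of insertion order. (Relatedly, your opening remark that a cover-preserving, grading-preserving bijection is ``automatically'' a poset isomorphism is false for general graded posets, so you really do need both directions.)

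The paper closes exactly this gap by proving order-independence: it checks, for any three pairwise adjacent vertices with bars in positions $|^{c} < |^{d} < |^{e}$, that the middle bar may be inserted last, reducing to the set identity $\uu_2\cap \wu_1 = (\uu_2\cap \vu_1)\cup \vd_2 \cup (\vu_2\cap \wu_1)$. A quicker fix, entirely in the spirit of your argument, is to bypass the inverse construction and reuse your forward step: given $G = F \cup \{\ve\}$ and $\w = \phi^{-1}(G)$, delete from $\w$ the bar at the position of $\ve$'s bar to get some $\w'$; your forward step gives $\phi(\w') = \phi(\w)\setminus\{\ve\} = F$, and bijectivity of $\phi$ forces $\w' = \phi^{-1}(F)$, so $\w$ covers $\phi^{-1}(F)$ as required.
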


\begin{proof}
The previous proposition shows the map $\phi$ is a grading-preserving bijection. We verify that $\phi$  and $\phi^{-1}$ are order preserving. If $\w\leq \ve$ then clearly $\phi(\w)\subseteq \phi(\ve)$ for both the bars in $\w$ and their adjacent decreasing parts are unaffected by the removal of other bars from $\ve$. 

If $G=F \cup \{\ue \}$ is in $\Gamma(\Des_n)$, we now show that $\phi^{-1}(F)\leq \phi^{-1}(G)$. For $|G|\leq 2$ this is obvious. The general situation follows from showing that $\phi^{-1}(G)$ is independent of the order in which its bars are inserted. More precisely, it is enough to check that for three pairwise adjacent vertices $\ue = \uu_1|^c \ud_2 \uu_2, \ve =\vu_1|^d \vd_2 \vu_2$, and $\w=\wu_1|^e \wdn_2 \wu_2$ in $V_{\Des_n}$ (with bars in increasing position order $|^{c},|^{d},|^{e}$ respectively,) we can insert the middle bar last. This can be done if the following holds: 
\[\uu_2\cap \wu_1 = (\uu_2\cap \vu_1)\cup \vd_2 \cup (\vu_2\cap \wu_1).\]
Equality holds since in both $\phi^{-1}(\{\ue,\w\})$ and $\phi^{-1}(\{\ue,\ve,\w\})$ the words to the right of $|^e$ and to the left of $|^c$ are the same.
\end{proof}

We now show that the $\gamma$-objects for the type $A_{n-1}$ and type $D_n$ Coxeter complexes form flag subcomplexes of $\Gamma(\Des_n)$.

\begin{proposition}
For $S \in \{\Pk_n, \Des_n^D\}$ the image $\Gamma(S):= \phi(S)$ is a flag subcomplex of $\Gamma(\Des_n)$. 
\end{proposition}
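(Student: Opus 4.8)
The plan is to read everything off the poset isomorphism $\phi\colon(\Des_n,\le)\to(\Gamma(\Des_n),\subseteq)$ of Proposition~\ref{prop:PosetBijection}. Under $\phi$, the image $\Gamma(S)=\phi(S)$ is a subcomplex of $\Gamma(\Des_n)$ precisely when $S$ is an order ideal of $(\Des_n,\le)$; and since $\Gamma(\Des_n)$ is flag, $\Gamma(S)$ is then automatically a flag subcomplex provided $S$ has the closure property: whenever every element of $\Des_n$ of rank $\le 2$ lying below a given $\w$ belongs to $S$, then $\w\in S$. (Equivalently: if every vertex and every edge of the face $\phi(\w)$ lies in $\phi(S)$, then so does $\phi(\w)$.) So for each $S\in\{\Pk_n,\Des_n^D\}$ it suffices to check (i) that $S$ is an order ideal and (ii) the closure property. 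The preliminary step is to describe these sets inside $\Des_n$: one has $\w\in\Pk_n$ iff every bar has colour $0$, every $\wdn_i$ is empty (so every block is increasing), and the last block has at least two entries (or $\w$ has no bars) --- which just restates ``no double descent, no final descent''; and one has $\w\in\Des_n^D$ iff either $w_1<w_2<w_3$, or $w_3$ is not the largest of $w_1,w_2,w_3$ and the colour $c_1$ of the first bar lies in $\{0,1\}$, so that $\Des_n^D$-membership depends only on $c_1$ and the relative order of the first three entries.

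For $S=\Pk_n$ both points are short. A cover relation in $(\Des_n,\le)$ deletes a bar and reorders the two merged blocks into a single down--up block; applied to $\w\in\Pk_n$ it deletes a colour-$0$ bar, merges two increasing blocks into an increasing block, and, if it deletes the last bar, only enlarges the last block, so the result stays in $\Pk_n$ and $\Pk_n$ is an order ideal. For (ii), the vertex $\acute{a}_i\,|^{c_i}\,\wdn_{i+1}\,\acute{b}_i$ of $\phi(\w)$ records the colour $c_i$ and has $\wdn_{i+1}$ as the decreasing part of its second block; hence if every vertex of $\phi(\w)$ lies in $\Pk_n$, then every $c_i=0$, every $\wdn_{i+1}$ is empty, and the last vertex forces the last block of $\w$ to have at least two entries, so $\w\in\Pk_n$. (The edges of $\phi(\w)$ are not even needed here.)

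For $S=\Des_n^D$ I would argue by cases on the position $p_1$ of the first bar of $\w$, the crux being that the restriction $\w'$ of $\w$ to its first two bars carries the same first-bar colour and the same $\Des_n^D$-relevant order type as $\w$. When $p_1=1$ this is immediate: the first block of $\w'$ is $(w_1)$ and its second block equals the second block of $\w$ (only later bars are deleted, and $p_1=1$ forces the second bar to sit in position $\ge3$, since positions $1$ and $2$ cannot both be peaks), so $\w'$ has exactly the same first three entries $w_1,w_2,w_3$ as $\w$; when $p_1=2$, both $\w$ and $\w'$ have first two entries $w_1<w_2$ with a peak in position $2$, so the middle of the first three entries is largest in both; and when $p_1\ge3$ both begin with an increasing run of length $3$. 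In every case $\w\in\Des_n^D\iff\w'\in\Des_n^D$, and (ii) follows. For (i) one runs a parallel bookkeeping on an arbitrary restriction $\ve$ of $\w\in\Des_n^D$: if $\ve$ retains the first bar of $\w$, its first block is unchanged, so its first one or two entries are unchanged and its third entry is still below $w_1$ (the merged second block of $\ve$ has decreasing part exactly $\wdn_2$, and both $w_2$ and $w_3$ survive below $w_1$); if $\ve$ drops the first bar, then its new first block is an increasing run of length $\ge3$ (the first two blocks of $\w$ cannot both be singletons, again because positions $1$ and $2$ cannot both be peaks), so $\ve$ has increasing type.

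The one place I expect real friction is this last item for $\Des_n^D$ in the subcase $p_1=1$ with the first bar retained: there the third entry of $\ve$ need not equal $w_3$, and one must locate it below $w_1$ by tracking how the reordering of a merged block moves that block's smallest entries to the front. Everything else is a finite verification once the descriptions of $\Pk_n$ and $\Des_n^D$ inside $\Des_n$ are written out.
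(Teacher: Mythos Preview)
Your overall strategy matches the paper's: use the poset isomorphism to reduce ``subcomplex'' to ``$S$ is a lower ideal,'' then verify a closure condition for flagness. For $\Pk_n$ your argument is essentially identical to the paper's, and correctly notes that vertices alone suffice.

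For $\Des_n^D$ your approach is genuinely more careful than the paper's, and this matters. The paper asserts that $\w$ has the pattern $w_2<w_1<w_3$ if and only if some vertex of $\phi(\w)$ does, and uses this to conclude that $\Gamma(\Des_n^D)$ is the \emph{induced} flag subcomplex on $V_{\Des_n^D}$. That claim is false: for $\w=3\,|^0\,14\,|^0\,2\in\Des_4$ one has $w_2<w_1<w_3$, yet $\phi(\w)=\{\,3|^0124,\ 134|^0 2\,\}$ and neither vertex exhibits the pattern (their initial triples are $3,1,2$ and $1,3,4$). Both vertices lie in $V_{\Des_4^D}$, so vertices alone do not detect membership and the induced flag complex on $V_{\Des_n^D}$ strictly contains $\phi(\Des_n^D)$. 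Your move to the rank-$2$ element $\w'$ keeping the first two bars fixes exactly this: since deleting only later bars leaves the first two blocks untouched, $\w'$ has the same first three letters and the same $c_1$ as $\w$, whence $\w\in\Des_n^D\iff\w'\in\Des_n^D$. This is precisely the edge-level closure property that flagness requires.

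Your order-ideal check for $\Des_n^D$ is on the right track but, as you anticipate, the case $p_1=1$ with the first bar retained needs the following observation spelled out: since $\w\in\Des_n^D$ and $w_1>w_2$ force $w_1>w_3$ as well, the merged second block of any such $\ve$ still contains two letters (namely $w_2$ and $w_3$) below $w_1$; hence $v_2,v_3<v_1=w_1$, so $v_1$ remains the maximum and $\ve\in\Des_n^D$. The remaining subcases ($p_1=2$, or $p_1\ge 3$, or the first bar dropped) go through as you indicate.
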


\begin{proof}
%\eran{proof changed, as I think that what you showed is not enough.}
To show $\Gamma(S)$ is a subcomplex, by Proposition \ref{prop:PosetBijection} it suffices to show that 
%for $\w \in S$, the vertices of $\phi(\w)$ are contained in: \[ V_S := \{ \ve \in S : \ve \mbox{ has exactly one bar}\}.\] This is straightforward to verify in all cases.
$(S,\leq)$ is a lower ideal in $(\Des_n,\leq)$. This is straightforward to verify in all cases.

For $\w \in \Pk_n$, all bars have color 0 and all subwords between bars are increasing. Omitting a bar $|^{c_i}$ from $\w$ we reorder $w_iw_{i+1}$ in increasing order as $\wdn_i$ is empty, thus the resulting element is in $\Pk_n$. 

Finally, if $\w \in \Des_n^D$, we observe that if the first three letters of $\w$ do not satisfy $w_2 < w_1 < w_3$, then neither can the first three letters of any coarsening of $\w$. 

To show that $\Gamma(S)$ is flag, we will show that it is the flag complex generated by the elements of $S$ with exactly one bar. Precisely, let \[ V_S := \{ \ve \in S : \ve \mbox{ has exactly one bar}\}.\] Since we have already shown $S$ is a lower ideal we know if $\w \in S$, $\phi(S) \subset V_S$. It remains to show that if $F$ is a collection of pairwise adjacent vertices in $V_S$ then we have $\phi^{-1}(F) \in S$. (Pairwise adjacency guarantees $\phi^{-1}(F)$ is well-defined; suppose each $F$ below has this property.) We now examine the combinatorics of each case individually.

First, if $F \subset V_{\Pk_n}$, then all the vertices of $F$ are of the form $\wu_1 | \wu_2$, and so $\phi^{-1}(F)$ has only 0-colored bars and no decreasing parts. That is, $\phi^{-1}(F) \in \Pk_n$.

In the case of $\Des_n^D$, observe that $\w \in \Des_n$ has $w_2 < w_1 < w_3$ if and only if $\phi(\w)$ has a vertex with the same property, and likewise for the color of a bar in position 1 or 2. Thus if $F \subset V_{\Des_n^D}$, then because each vertex avoids $w_2 < w_1 < w_3$ and has appropriately colored bars (if any) in positions 1 and 2, we have $\phi^{-1}(F) \in \Des_n^D$. This completes the proof.
\end{proof}

In light of the results of Sections \ref{sec:A}, \ref{sec:B}, and \ref{sec:D}, and because the dimension of faces corresponds to the number of bars, we have the following result, which, along with Table \ref{table} implies part (a) of Theorem \ref{thm:KrKex}.

\begin{corollary}\label{cor:ABDgam}
We have:
\begin{enumerate}
\item $\gamma(A_{n-1}) = f(\Gamma(\Pk_n))$,
\item $\gamma(B_n)=f(\Gamma(\Des_n)$, and
\item $\gamma(D_n)=f(\Gamma(\Des_n^D)$.
\end{enumerate}
In particular, the $\gamma$-vectors of the type $A_{n-1}$, $B_n$, and $D_n$ Coxeter complexes satisfy the Kruskal-Katona inequalities.
\end{corollary}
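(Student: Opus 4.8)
The plan is to prove the chain of identities $\gamma(A_{n-1}) = f(\Gamma(\Pk_n))$, $\gamma(B_n) = f(\Gamma(\Des_n))$, and $\gamma(D_n) = f(\Gamma(\Des_n^D))$, and then invoke the Kruskal-Katona theorem, which says precisely that the $f$-vector of \emph{any} simplicial complex satisfies the stated inequalities. So once these three complexes are shown to exist and have the right $f$-vectors, the Kruskal-Katona property is automatic. For the exceptional Coxeter types $E_6, E_7, E_8, F_4, G_2, H_3, H_4, I_2(m)$, where no such uniform combinatorial model is available, I would instead verify the inequalities directly from the explicit $\gamma$-vectors listed in Table~\ref{table}: compute each $f_{i+1} \le f_i^{(i)}$ by hand (or by machine) using the Macaulay-type expansion defined above. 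These are finitely many small arithmetic checks (the $I_2(m)$ case has $\gamma = (1, 2m-4)$, for which there is nothing to check beyond $\gamma_1 \ge 0$).

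First I would establish the bijective correspondence between faces of $\Gamma(\Des_n)$ and decorated permutations. This is exactly Proposition~\ref{prop:SetBijection}: the map $\phi$ is a bijection between faces of $\Gamma(\Des_n)$ and elements of $\Des_n$, and moreover (Proposition~\ref{prop:PosetBijection}) it is a graded poset isomorphism, where the grading on $\Des_n$ is by number of bars and the grading on $\Gamma(\Des_n)$ is by cardinality of a face. Since a face of dimension $k$ has $k+1$ vertices and corresponds to a decorated permutation with $k+1$ bars, i.e.\ with $\pk(\w) = k+1$, we get
\[
f_{k+1}\bigl(\Gamma(\Des_n)\bigr) = \bigl|\{\w \in \Des_n : \pk(\w) = k+1\}\bigr| = \gamma_{k+1}(B_n),
\]
the last equality by the formula in Section~\ref{sec:B}; and $f_0 = \gamma_0 = 1$ accounting for the empty face. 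This gives $f(\Gamma(\Des_n)) = \gamma(B_n)$.

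Next I would handle types $A$ and $D$ by restriction. The preceding proposition shows that for $S \in \{\Pk_n, \Des_n^D\}$ the image $\Gamma(S) = \phi(S)$ is a flag subcomplex of $\Gamma(\Des_n)$; since $\phi$ restricts to a grading-preserving bijection from $S$ onto the faces of $\Gamma(S)$, the same dimension-counting argument gives $f_{k+1}(\Gamma(S)) = |\{\w \in S : \pk(\w) = k+1\}|$. Combining this with $\gamma_i(A_{n-1}) = |\{w \in \Pk_n : \des(w) = i\}|$ (noting $\des(w) = \pk(0w0)-1$ equals the number of bars for $w \in \Pk_n$) and $\gamma_i(D_n) = |\{\w \in \Des_n^D : \pk(\w) = i\}|$ from Sections~\ref{sec:A} and~\ref{sec:D} yields parts (1) and (3). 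The only subtlety I anticipate — and the step most deserving of care rather than difficulty — is bookkeeping the index shift between ``number of bars'' and the subscript on $\gamma$, and confirming that for $w \in \Pk_n$ the statistic counted by $\gamma_i(A_{n-1})$ (descents) genuinely coincides with the number of bars of $\phi$-preimages, which follows since elements of $\Pk_n$ have a bar exactly at each descent/peak position. With the three $f$-vector identities in hand, together with the finite table check for exceptional types, the Kruskal-Katona inequalities follow immediately from the Katona-Kruskal-Sch\"utzenberger theorem, completing part (a) of Theorem~\ref{thm:KrKex}.
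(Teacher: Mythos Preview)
Your proposal is correct and follows essentially the same approach as the paper: the paper's own justification for the corollary is simply the sentence ``In light of the results of Sections~\ref{sec:A}, \ref{sec:B}, and~\ref{sec:D}, and because the dimension of faces corresponds to the number of bars,'' together with the pointer to Table~\ref{table} for the exceptional types. You have expanded this into a careful dimension-count using Propositions~\ref{prop:SetBijection} and~\ref{prop:PosetBijection} and the subcomplex proposition for $\Pk_n$ and $\Des_n^D$, which is exactly what the paper intends; your bookkeeping of the index shift (a face with $i$ vertices corresponds to a decorated permutation with $i$ bars, hence $f_i = \gamma_i$) and the verification that $\des(w)$ equals the number of bars for $w \in \Pk_n$ are both correct.
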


\begin{remark}
The construction of $\Gamma(\Des_n)$ admits an obvious generalization to any number of colors of bars, though we have no examples of simplicial complexes whose $\gamma$-vectors would be modeled by the faces of such a complex (and for which a result like Corollary \ref{cor:ABDgam} might exist). 
\end{remark}

\begin{remark}
In view of Theorem \ref{thm:PRW}, we can observe that if $\B$ is a connected chordal building set such that $(\Pk_n(\B),\leq)$ is a lower ideal in $(\Des_n,\leq)$, then a result such as Corollary \ref{cor:ABDgam} applies. That is, we would have $\gamma(P_{\B}) = f(\phi(\Pk_n(\B)))$. In particular, we would like to use such an approach to the $\gamma$-vector of the associahedron. However, $\Pk_n(312)$ is not generally a lower ideal in $\Des_n$. For example, with $n=5$, we have $w=3|24|15 > 3|1245=u$. While $w$ is 312-avoiding, $u$ is clearly not.
\end{remark}

\subsection{The associahedron}

First we give a useful characterization of the set $\Pk_n(312)$.

\begin{observation}\label{ob:312}
If $w \in \Pk_n(312)$, it has the form 
\begin{equation}\label{eq:312}
 w = \acute{a}_1 \,\, j_1 i_1 \,\, \acute{a}_2 \,\, j_2 i_2 \, \cdots \, \acute{a}_k \,\, j_k i_k \,\, \acute{a}_{k+1},
\end{equation}
 where:
\begin{itemize}
\item $j_1 < \cdots < j_k$,
\item $j_s > i_s$ for all $s$, and 
\item $\acute{a}_s$ is the word formed by the letters of $\{ r \in [n] \setminus \{ i_1, j_1, \ldots, i_k,j_k\} : j_{s-1} < r < j_s \}$ (with $j_0 = 0$, $j_{k+1} = n+1$) written in increasing order.
\end{itemize}
\end{observation}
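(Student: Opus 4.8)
The plan is to characterize the shape of a $312$-avoiding permutation $w$ that lies in $\Pk_n$ directly from the two defining conditions. Recall $w \in \Pk_n$ means $w$ has no double descents (if $w_{i-1} > w_i$ then $w_i < w_{i+1}$) and no final descent ($w_{n-1} < w_n$); and $w \in \Sn_n(312)$ means there is no triple $i<j<k$ with $w_j < w_k < w_i$. First I would observe that the no-double-descent condition forces every descent to be an isolated descent, so the one-line notation of $w$ breaks into maximal increasing runs, and between consecutive runs there is exactly one descent step $w_{p} > w_{p+1}$; moreover the no-final-descent condition says the last run has length at least one past any descent, i.e.\ $w$ does not end in a descent. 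Writing the descent tops as $j_1, \ldots, j_k$ (in order of position) and the descent bottoms as $i_1, \ldots, i_k$, this gives the skeleton in \eqref{eq:312}: the word is a concatenation of increasing blocks $\acute a_1, \acute a_2, \ldots, \acute a_{k+1}$ with the pairs $j_s i_s$ inserted as the descent steps. (Some $\acute a_s$ may be empty; the conditions $w \in \Pk_n$ guarantee $\acute a_1$ and $\acute a_{k+1}$ behave as stated, in particular $w$ is a down-up word of the required type within each block.)

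Next I would pin down the three bulleted properties. The condition $j_s > i_s$ is just the statement that position of $j_s$ is a descent. For $j_1 < \cdots < j_k$: suppose toward a contradiction that $j_{s} > j_{s+1}$ for some $s$; since $i_{s+1}$ sits immediately after $j_{s+1}$ and $i_{s+1} < j_{s+1} < j_s$, the triple (position of $j_s$, position of $j_{s+1}$, position of $i_{s+1}$) would be a $312$-pattern unless $i_{s+1} \geq j_s$, which is impossible as $i_{s+1} < j_{s+1} < j_s$; hence $j_s < j_{s+1}$. (One should also rule out $j_s = j_{s+1}$, which is automatic since the $j_s$ are distinct entries of a permutation.) Finally, for the description of $\acute a_s$: the entries of $\acute a_s$ are exactly those letters not among the $i_t, j_t$ that lie strictly between positions of $j_{s-1}$ and $j_s$; I would argue each such letter $r$ satisfies $j_{s-1} < r < j_s$. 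That $r > j_{s-1}$: if $r < j_{s-1}$ then since $r$ appears after $j_{s-1}$'s descent, we can find a $312$-pattern using $j_{s-1}$, the descent bottom following it, and $r$ — more carefully, $j_{s-1} > $ (its descent bottom $i_{s-1}$) and one checks $r$ placed after creates $312$ with $j_{s-1}$ over a suitable pair, or alternatively $r$ being smaller than a preceding descent top while sitting in a later increasing run directly violates $312$-avoidance. That $r < j_s$: if $r > j_s$, then $r$ appears before $j_s$ (since $r$ is in block $\acute a_s$, strictly left of $j_s$), and the triple $(r, j_s, i_s)$ with $i_s < j_s < r$ is a $312$-pattern. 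So $j_{s-1} < r < j_s$, and since $\acute a_s$ is an increasing run these letters appear in increasing order, giving the stated description of $\acute a_s$ as the increasingly-sorted set $\{ r \notin \{i_t, j_t\} : j_{s-1} < r < j_s\}$.

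I expect the main obstacle to be the bookkeeping in the last bullet: showing that the block $\acute a_s$ is exactly the claimed \emph{set} of letters, not merely a subset — i.e.\ that every letter $r$ with $j_{s-1} < r < j_s$ and $r \notin \{i_t, j_t\}$ actually lands in block $\acute a_s$ and nowhere else. This is a counting/consistency argument: the sets $\{ r : j_{s-1} < r < j_s\} \setminus \{i_t, j_t\}$ over $s = 1, \ldots, k+1$ together with $\{i_1, j_1, \ldots, i_k, j_k\}$ partition $[n]$ (using $j_1 < \cdots < j_k$ and $i_s < j_s$), and the $\acute a_s$ together with the $j_s i_s$ pairs partition the positions; so once each $\acute a_s$ is shown to be contained in the corresponding interval, the cardinalities force equality and the letters must appear in increasing order because $\acute a_s$ lies inside a single increasing run of $w$. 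I would also need a small check that no $i_t$ can slip into the ``wrong'' interval — but $i_s$ immediately follows $j_s$, hence lies between the descent step at $j_s$ and the next descent step (or the end), placing it correctly, and $312$-avoidance plus $i_s < j_s$ keeps it from colliding with the interval structure. Everything else is a routine unwinding of the $\Pk_n$ and $\Sn_n(312)$ definitions.
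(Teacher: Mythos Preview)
The paper does not prove this statement; it is recorded as an Observation and left to the reader. Your outline is the right one, but two of the specific $312$-patterns you invoke are incorrect.

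For $j_1 < \cdots < j_k$: you claim that if $j_s > j_{s+1}$ then the triple $(j_s, j_{s+1}, i_{s+1})$ (listed in position order) is a $312$-pattern. But the values satisfy $i_{s+1} < j_{s+1} < j_s$, which is the pattern $321$, not $312$. The correct witness is $(j_s, i_s, j_{s+1})$: since $i_s$ and $j_{s+1}$ lie in the same maximal increasing run of $w$ (there is no descent strictly between them) we have $i_s < j_{s+1}$, and combined with the hypothesis $j_{s+1} < j_s$ this gives $i_s < j_{s+1} < j_s$, a genuine $312$.

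For $r < j_s$ with $r$ a letter of $\acute a_s$: your triple $(r, j_s, i_s)$ under the assumption $r > j_s$ has values $r > j_s > i_s$, again $321$. In fact no pattern argument is needed here at all: $\acute a_s$ together with $j_s$ lies in a single maximal increasing run of $w$ (the run from $i_{s-1}$ up to $j_s$), so $r < j_s$ is immediate from the $\Pk_n$ structure.

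Your argument for $r > j_{s-1}$ is correct once made precise: $r$ lies in the increasing run starting at $i_{s-1}$, hence $r > i_{s-1}$; if also $r < j_{s-1}$ then $(j_{s-1}, i_{s-1}, r)$ in position order has $i_{s-1} < r < j_{s-1}$, which \emph{is} a $312$. The counting argument you sketch for the reverse inclusion (that every letter in the interval actually lands in $\acute a_s$) is fine.
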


In particular, since $w$ has no double descents and no final descent, we see that $\acute{a}_{k+1}$ is always nonempty and $w_n = n$. We refer to $(i_s,j_s)$ as a \emph{descent pair} of $w$.

Given distinct integers $a,b,c,d$ with $a<b$ and $c<d$, we say the pairs $(a,b)$ and $(c,d)$ are \emph{crossing} if either of the following statements are true:
\begin{itemize}
\item $a < c < b < d$ or
\item $c < a < d < b$.
\end{itemize}
Otherwise, we say the pairs are \emph{noncrossing}. For example, $(1, 5)$ and $(4,7)$ are crossing, whereas both the pairs $(1,5)$ and $(2,4)$ and the pairs $(1,5)$ and $(6,7)$ are noncrossing.

Define the vertex set \[ V_{\Pk_n(312)} := \{ (a,b) : 1 \leq a < b \leq n-1 \}.\]

\begin{definition}
Let $\Gamma(\Pk_n(312))$ be the collection of subsets $F$ of $V_{\Pk_n(312)}$ such that every two distinct vertices in $F$ are noncrossing.
\end{definition}

By definition $\Gamma(\Pk_n(312))$ is a flag simplicial complex, and so the task remains to show that the faces of the complex correspond to the elements of $\Pk_n(312)$.

Define a map $\pi: \Pk_n(312) \to \Gamma(\Pk_n(312))$ as follows: \[ \pi(w) = \{ (w_{i+1}, w_i) : w_i > w_{i+1} \}.\]

\begin{proposition}
The map $\pi$ is a bijection between faces of $\Gamma(\Pk_n(312))$ and $\Pk_n(312)$.
\end{proposition}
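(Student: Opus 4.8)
The plan is to show that $\pi$ is a well-defined injection whose image is exactly the flag complex $\Gamma(\Pk_n(312))$, and to do this by exhibiting the inverse map explicitly. First I would check that $\pi$ is well defined, i.e. that $\pi(w)$ is a face. Using the normal form of Observation \ref{ob:312}, the descent pairs of $w$ are exactly the pairs $(i_s,j_s)$ with $1\le i_s<j_s\le n-1$ (since $w_n=n$ forces every descent top $j_s$ to be at most $n-1$, and the bottoms $i_s$ are likewise in $[n-1]$), so each element of $\pi(w)$ lies in $V_{\Pk_n(312)}$. Then I would verify that any two descent pairs $(i_s,j_s)$ and $(i_t,j_t)$ with $s<t$ are noncrossing: since $j_s<j_t$, crossing would force $i_s<i_t<j_s<j_t$, but the letters between $j_s$ and $j_t$ other than the $i$'s, $j$'s are precisely the increasing run $\acute a_{t}$ (and $\acute a_{s+1},\dots$), and one checks from the explicit shape of $w$ that $i_t$, being a descent bottom sitting immediately after $j_t$, cannot lie strictly between $i_s$ and $j_s$ — so the pairs are noncrossing. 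Hence $\pi(w)\in\Gamma(\Pk_n(312))$.

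Next I would prove injectivity, which is easy: from $\pi(w)$ one reads off the multiset of descent pairs $\{(i_s,j_s)\}$, hence the values $j_1<\cdots<j_k$ and the matched bottoms $i_s$; the remaining letters and their positions are then completely forced by the third bullet of Observation \ref{ob:312} (the $\acute a_s$ are increasing runs filling the gaps). So $w$ is recovered from $\pi(w)$.

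The real content is surjectivity, i.e. constructing $\pi^{-1}$ on an arbitrary face $F=\{(a_1,b_1),\dots,(a_k,b_k)\}$ of pairwise noncrossing pairs in $V_{\Pk_n(312)}$ and checking the output lies in $\Pk_n(312)$. I would order the pairs so that $b_1\le b_2\le\cdots\le b_k$ (I would also want to rule out $b_s=b_t$ for $s\ne t$: if two pairs share a top $b$, noncrossing plus distinctness forces one to be nested inside... but actually $(a_s,b)$ and $(a_t,b)$ with $a_s<a_t$ are noncrossing, so this must be handled — I expect the correct claim is that a genuine face cannot contain two pairs with equal top unless forced, and in the reconstruction equal tops would collide; more carefully, I would argue that pairwise noncrossing pairs in $[n-1]\times[n-1]$ with the given convention automatically have distinct tops, or else re-sort to make the construction unambiguous). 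Then build the word $w=\acute a_1\, b_1 a_1\, \acute a_2\, b_2 a_2\cdots \acute a_k\, b_k a_k\,\acute a_{k+1}$ where $\acute a_s$ is the increasing arrangement of $\{r\in[n]\setminus\{a_1,b_1,\dots,a_k,b_k\}: b_{s-1}<r<b_s\}$, with $b_0=0,b_{k+1}=n+1$. I would then verify: (i) $w$ is a genuine permutation of $[n]$ — this needs that the sets partitioning the complement of the $a$'s and $b$'s by the thresholds $b_s$ are disjoint and exhaust, and that each $a_s$ and $b_s$ appears once; (ii) $w\in\Pk_n$, i.e. no double descent and no final descent — the final run $\acute a_{k+1}\ni n$ is nonempty and the only descents are the $b_s a_s$ steps, each flanked by increasing runs; (iii) $w$ is $312$-avoiding — here is where I would use the noncrossing hypothesis: a $312$ pattern in a word of this shape would produce two descent pairs $(a_s,b_s),(a_t,b_t)$, or a descent pair together with a later larger letter, in a crossing configuration $a_s<a_t<b_s<b_t$, contradicting noncrossing; I would also need to rule out patterns using a single descent pair and two letters from the runs, which the increasing structure of the runs prevents. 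Finally $\pi(w)=F$ by construction and $\pi(\pi^{-1}(F))=F$, $\pi^{-1}(\pi(w))=w$ by Observation \ref{ob:312}.

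The main obstacle I anticipate is step (iii) together with the bookkeeping around tops of pairs: one must show precisely that "crossing of descent pairs" is the only obstruction to $312$-avoidance for words of the normal form, and symmetrically that every pairwise-noncrossing $F$ does yield a $312$-avoider — this is the crux and will require a careful case analysis of where the three indices of a putative $312$ pattern fall among the blocks $\acute a_s$, $b_s$, $a_s$. A secondary but necessary point is confirming that $V_{\Pk_n(312)}$ was chosen correctly (pairs from $[n-1]\times[n-1]$, not $[n]\times[n]$) so that the reconstructed word automatically ends in $n$ and has nonempty final run; this is what makes $\pi$ land in $\Pk_n(312)$ rather than a larger set, and it should fall out of the observation that $n$ never appears as a descent bottom or as a non-maximal descent top.
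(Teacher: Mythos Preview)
Your approach is essentially the paper's: verify $\pi$ is well defined, injective via Observation~\ref{ob:312}, and surjective by reconstructing $w$ from a noncrossing family in exactly the normal form \eqref{eq:312}. Two small remarks. First, your well-definedness step is vaguer than it needs to be: rather than arguing structurally that ``$i_t$ cannot lie strictly between $i_s$ and $j_s$,'' just note (as the paper does) that if $i_s<i_t<j_s<j_t$ then the subsequence $j_s,i_s,i_t$ of $w$ is a $312$ pattern, contradicting $w\in\Pk_n(312)$. Second, your worry about two pairs sharing a top (or bottom) is resolved by the paper's phrasing ``given \emph{distinct} integers $a,b,c,d$'': noncrossing is only defined for pairs on four distinct values, so two vertices sharing an element are not adjacent in $\Gamma(\Pk_n(312))$ and never occur together in a face. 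Your caution about step~(iii)---checking that the reconstructed $w$ is genuinely $312$-avoiding---is well placed; the paper states surjectivity in one line, but the case analysis you outline (any $312$ pattern in a word of the shape \eqref{eq:312} forces two descent pairs into a crossing configuration) is indeed what makes it work.
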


\begin{proof}
Suppose $w$ is as in \eqref{eq:312}. We claim that the descent pairs $(i_s,j_s)$ and $(i_t,j_t)$ (with $j_s<j_t$, say) are noncrossing. Indeed, if $i_s < i_t < j_s < j_t$, then the subword $j_s i_s i_t$ forms the pattern 312. Therefore (and because $w_n=n$) we see the map $\pi(w) = \{ (i_1, j_1), \ldots, (i_k,j_k)\}$ is well-defined.

That $\pi$ is injective follows from the Observation \ref{ob:312}. Indeed if $\pi(w) = \pi(v)$, then because $j_1 < \cdots < j_k$ the descents $j_s i_s$ occur in the same relative positions in $w$ as in $v$, and the contents of the increasing words $\acute{a}_s$ are forced after identifying the descent pairs, then $w=v$.

Now consider a face $F = \{ (i_1,j_1),\ldots, (i_k,j_k)\}$ of $\Gamma(\Pk_n(312))$. To construct $\pi^{-1}(F)$, we simply order the pairs in $F$ so that $j_1 < \cdots < j_k$ and form the permutation $\pi^{-1}(F) = w$ as in \eqref{eq:312}.
\end{proof}

By construction, we have $|\pi(w)| = \des(w)$, and therefore the results of Section \ref{sec:ass} imply the following result, proving part (b) of Theorem \ref{thm:KrKex}.

\begin{corollary}
We have:
\[
\gamma(\Ass_n)=f(\Gamma(\Pk_n(312))).
\]
In particular, the $\gamma$-vector of the associahedron satisfies the Kruskal-Katona inequalities.
\end{corollary}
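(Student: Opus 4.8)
The plan is to assemble the corollary from the pieces already established. The essential claim is that the face-enumeration of $\Gamma(\Pk_n(312))$ agrees coefficientwise with $\gamma(\Ass_n)$. First I would recall from Section~\ref{sec:ass} the identity $\gamma_i(\Ass_n) = |\{ w \in \Pk_n(312) : \des(w) = i\}|$, which came from Theorem~\ref{thm:PRW} applied to the $312$-avoiding $\B$-permutations of the associahedron. Next I would invoke the preceding proposition, which shows that $\pi$ is a bijection between the faces of $\Gamma(\Pk_n(312))$ and the permutations in $\Pk_n(312)$. The only extra bookkeeping needed is the degree count: since $\pi(w) = \{ (w_{i+1}, w_i) : w_i > w_{i+1}\}$ contributes exactly one vertex per descent of $w$, we have $|\pi(w)| = \des(w)$, so $\pi$ restricts to a bijection between the faces with exactly $i$ vertices (those of dimension $i-1$) and the permutations with exactly $i$ descents. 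Hence $f_i(\Gamma(\Pk_n(312))) = \gamma_i(\Ass_n)$ for every $i$, which is the displayed equality.

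For the ``in particular'' assertion, I would observe that $\Gamma(\Pk_n(312))$ is by construction a genuine (indeed flag) simplicial complex: it is defined as the collection of all sets of pairwise noncrossing pairs, a family obviously closed under passing to subsets. Therefore its $f$-vector satisfies conditions (a)--(c) of the Katona--Kruskal--Sch\"utzenberger theorem stated above, and transporting this along the equality $f(\Gamma(\Pk_n(312))) = \gamma(\Ass_n)$ shows that $\gamma(\Ass_n)$ satisfies the Kruskal--Katona inequalities.

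I do not expect a real obstacle at this stage: the substantive content---that ``noncrossing'' is the correct adjacency relation to capture faces, and that $\pi$ is well defined, injective, and surjective onto faces---has all been discharged via Observation~\ref{ob:312} and the preceding proposition. The one point deserving a line of care is making the indexing conventions line up: one must check that $f_i$, which counts $(i-1)$-dimensional faces, pairs with ``$\des(w)=i$'' rather than with an off-by-one shift, and tracking $|\pi(w)| = \des(w)$ against the definition $f(\Delta;t) = \sum_i f_i(\Delta) t^i$ confirms this. For context one could also remark that $\Gamma(\Pk_n(312))$ is precisely the flag complex of mutually noncrossing pairs drawn from $[n-1]$, though this observation is not needed for the proof.
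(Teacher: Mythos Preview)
Your proposal is correct and follows essentially the same route as the paper: the paper simply notes that $|\pi(w)|=\des(w)$ by construction and combines the bijection of the preceding proposition with the formula $\gamma_i(\Ass_n)=|\{w\in\Pk_n(312):\des(w)=i\}|$ from Section~\ref{sec:ass}. Your write-up adds a careful indexing check and the observation that $\Gamma(\Pk_n(312))$ is flag, but the argument is the same.
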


\begin{remark}
It is well known that the $h$-vector of the associahedron has a combinatorial interpretation given by \emph{noncrossing partitions}. Simion and Ullmann \cite{SU} give a particular decomposition of the lattice of noncrossing partitions that can be used to describe $\gamma(\Ass_n)$ in a (superficially) different manner.
\end{remark}

\subsection{The cyclohedron}

For the cyclohedron, let \[ V_{P_n} := \{ (l,r) \in [n] \times [n] : l \neq r \}.\] Two vertices $(l_1,r_1)$ and $(l_2,r_2)$ are adjacent if and only if:
\begin{itemize}
\item $l_1, l_2, r_1, r_2$ are distinct and 
\item $l_1 < l_2$ if and only if $r_1 < r_2$.
\end{itemize}
Define $\Gamma(P_n)$ to be the flag complex whose faces $F$ are all subsets of $V_{P_n}$ such that every two distinct vertices in $F$ are adjacent.

We let $\psi: P_n \to \Gamma(P_n)$ be defined as follows. If \[\sigma = \left( \begin{array}{c c}
           l_1 & r_1 \\
           \vdots & \vdots \\
           l_k & r_k \end{array} \right) \] is an element of $P_n$, then $\psi(\sigma)$ is simply the set of rows of $\sigma$:
\[ \psi(\sigma) = \{ (l_1, r_1), \ldots, (l_k, r_k)\}. \]  Clearly this map is invertible, for we can list a set of pairwise adjacent  vertices in increasing order (by $l_i$ or by $r_i$) to obtain an element of $P_n$. We have the following.

\begin{proposition}
The map $\psi$ is a bijection between faces of $\Gamma(P_n)$ and the elements of $P_n$.
\end{proposition}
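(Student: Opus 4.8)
The plan is to verify three things in turn: that $\psi$ maps $P_n$ into the face set of $\Gamma(P_n)$, that it is injective, and that it is surjective. The definition of adjacency is tailored so that all three come out almost immediately; the only point requiring a moment's thought is the reconstruction of the inverse.

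First I would check that $\psi$ is well defined, i.e.\ that $\psi(\sigma) \in \Gamma(P_n)$ for every $\sigma = (L,R) \in P_n$ written in its canonical $k \times 2$ array. Each row $(l_i,r_i)$ satisfies $l_i \neq r_i$ because $L \cap R = \emptyset$, so it lies in $V_{P_n}$. For two distinct rows $i < j$, the four entries $l_i, l_j, r_i, r_j$ are pairwise distinct: $l_i \neq l_j$ and $r_i \neq r_j$ because each column is strictly increasing, and $l_i \neq r_j$, $l_j \neq r_i$ because $L$ and $R$ are disjoint. Moreover $i < j$ gives $l_i < l_j$ and $r_i < r_j$ simultaneously, so the biconditional $l_i < l_j \Leftrightarrow r_i < r_j$ holds. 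Hence the rows of $\sigma$ are pairwise adjacent, and $\psi(\sigma)$ is a face of $\Gamma(P_n)$.

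Injectivity is immediate: from the set $\psi(\sigma) = \{(l_1,r_1),\ldots,(l_k,r_k)\}$ one recovers $L$ as the set of first coordinates and $R$ as the set of second coordinates, and the canonical array is then uniquely determined by listing each column in increasing order. For surjectivity, given a face $F = \{(l_1,r_1),\ldots,(l_k,r_k)\}$ I would first observe that pairwise adjacency forces $l_1,\ldots,l_k$ distinct, $r_1,\ldots,r_k$ distinct, and $l_i \neq r_j$ for $i \neq j$; combined with $l_i \neq r_i$ (the vertex condition) this shows $L := \{l_1,\ldots,l_k\}$ and $R := \{r_1,\ldots,r_k\}$ are disjoint $k$-element subsets of $[n]$, so $(L,R) \in P_n$. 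The crux is then that the second adjacency condition lets us sort by both coordinates at once: relabelling so that $l_1 < l_2 < \cdots < l_k$, the biconditional $l_i < l_j \Leftrightarrow r_i < r_j$ yields $r_1 < r_2 < \cdots < r_k$ as well, so the array with these rows in this order is exactly the canonical form of $\sigma = (L,R)$, and $\psi(\sigma) = F$ (the empty face corresponding to $k=0$).

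I expect no genuine obstacle here; the one thing to state carefully is that the order on $F$ obtained by sorting on the first coordinate agrees with the one obtained by sorting on the second, which is precisely the content of the biconditional in the definition of adjacency. It may be worth recording this observation explicitly, since it is exactly what makes $\rho(\sigma) = |L| = |R|$ agree with the number of rows of $\sigma$, i.e.\ with $\dim \psi(\sigma) + 1$, so that together with the identity for $h(\Cyc_n;t)$ from Section~\ref{sec:coxnest} this proposition will give $\gamma(\Cyc_n) = f(\Gamma(P_n))$.
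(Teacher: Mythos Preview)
Your proposal is correct and follows exactly the approach the paper indicates: the paper gives only a one-sentence sketch (``Clearly this map is invertible, for we can list a set of pairwise adjacent vertices in increasing order (by $l_i$ or by $r_i$) to obtain an element of $P_n$'') before stating the proposition without further proof, and you have simply written out in full the well-definedness, injectivity, and surjectivity that this sentence encapsulates.
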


We are now able to complete the proof of Theorem \ref{thm:KrKex}, as the following implies part (c).

\begin{corollary}
We have \[ \gamma(\Cyc_n) = f(\Gamma(P_n)).\] In particular, the $\gamma$-vector of the cyclohedron satisfies the Kruskal-Katona inequalities.
\end{corollary}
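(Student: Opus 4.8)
The plan is to verify the equality $\gamma(\Cyc_n) = f(\Gamma(P_n))$ by checking, dimension by dimension, that the number of $k$-dimensional faces of $\Gamma(P_n)$ equals $\gamma_k(\Cyc_n) = \binom{n}{k,k,n-2k}$. By the preceding proposition, $\psi$ is a bijection between faces of $\Gamma(P_n)$ and elements of $P_n$, and it carries a face $F$ with $|F|=k$ to an element $\sigma \in P_n$ with $\rho(\sigma) = k$. Hence $f_{k}(\Gamma(P_n)) = |\{\sigma \in P_n : \rho(\sigma) = k\}|$, which by the last display of Section~\ref{sec:coxnest} is exactly $\gamma_k(\Cyc_n)$. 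So once the proposition on $\psi$ is in hand, the corollary is essentially a bookkeeping statement, and the ``in particular'' clause follows since any $f$-vector of a simplicial complex satisfies the Kruskal-Katona inequalities by the Katona-Kruskal-Sch\"utzenberger theorem.

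The one point that genuinely needs care is that $\psi$ really is a bijection onto the faces of $\Gamma(P_n)$ and not merely onto all of $P_n$ --- that is, that a set of rows coming from a valid array $\sigma \in P_n$ is automatically a set of pairwise adjacent vertices, and conversely. First I would check the forward direction: if $\sigma \in P_n$ has rows $(l_1,r_1),\ldots,(l_k,r_k)$ with $l_1 < \cdots < l_k$, then since $L \cap R = \emptyset$ all of $l_1,\ldots,l_k,r_1,\ldots,r_k$ are distinct, so any two rows have four distinct entries; and since the $r_i$ are also listed in increasing order, $l_i < l_j \iff r_i < r_j$ for any $i,j$. Thus every two rows are adjacent and $\psi(\sigma) \in \Gamma(P_n)$. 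For the reverse direction, given a face $F = \{(l_1,r_1),\ldots,(l_k,r_k)\}$ of $\Gamma(P_n)$, pairwise adjacency forces all $2k$ coordinates to be distinct (in particular $L = \{l_i\}$ and $R=\{r_i\}$ are disjoint $k$-element sets), and the ``$l_i < l_j \iff r_i < r_j$'' condition means that sorting the pairs by first coordinate simultaneously sorts them by second coordinate, so the resulting array is a well-defined element of $P_n$ with $\rho = k$; this is $\psi^{-1}(F)$. These two constructions are visibly mutually inverse.

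I do not anticipate a serious obstacle here: the combinatorics of $\Gamma(P_n)$ was rigged precisely so that faces are arrays in $P_n$, and the $\gamma$-vector formula $\gamma_i(\Cyc_n) = \binom{n}{i,i,n-2i}$ is quoted from \cite{PRW}. The only mild subtlety worth spelling out is that the vertex set $V_{P_n} = \{(l,r) \in [n]\times[n] : l\neq r\}$ has $n(n-1)$ elements, matching $f_1(\Gamma(P_n)) = \gamma_1(\Cyc_n) = \binom{n}{1,1,n-2} = n(n-1)$, and $f_0 = 1 = \gamma_0(\Cyc_n)$ corresponds to the empty face; these base cases are consistent with the general count and require no separate argument. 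With the bijection $\psi$ established and its compatibility with the statistics $\rho$ and $\dim$ noted, the corollary follows.
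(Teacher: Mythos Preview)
Your proposal is correct and follows essentially the same approach as the paper: once the proposition establishes that $\psi$ is a bijection between faces of $\Gamma(P_n)$ and elements of $P_n$ carrying $|F|$ to $\rho(\sigma)$, the corollary is immediate from the identity $\gamma_i(\Cyc_n) = |\{\sigma \in P_n : \rho(\sigma)=i\}|$ and the Kruskal--Katona theorem. Your added verification that $\psi$ lands in $\Gamma(P_n)$ and is invertible simply spells out what the paper dismisses as ``clearly this map is invertible.''
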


\section{Flag spheres with few vertices}\label{sec:small}
%\eran{I've drastically revised this section. Please check if it is clear now.}

We now describe a different class of flag spheres whose $\gamma$-vectors satisfy the Kruskal-Katona inequalities: those with few vertices relative to their dimension. Our starting point is the following lemma, see \cite{Mesh} and \cite[Lemma 2.1.14]{Gal}. (Recall that the boundary of the $d$-dimensional \emph{cross-polytope} is the $d$-fold join of the zero-dimensional sphere, called also the \emph{octahedral sphere}.)

\begin{lemma}[Meshulam, Gal]\label{lem:Gal}
If $\Delta$ is a flag homology sphere then:
\begin{enumerate}
\item[(a)] $\gamma_1(\Delta)\geq 0$,
\item[(b)] if $\gamma_1(\Delta) = 0$, then $\Delta$ is an octahedral sphere.
\end{enumerate}
\end{lemma}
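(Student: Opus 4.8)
The plan is to prove both statements together by induction on $d$, where $\Delta$ is a $(d-1)$-dimensional flag homology sphere. Recall from the computation of the $\gamma$-vector that $\gamma_1(\Delta)$ equals the number of vertices of $\Delta$ minus $2d$. Thus part (a) is equivalent to the assertion that a flag homology $(d-1)$-sphere has at least $2d$ vertices, and part (b) to the assertion that having exactly $2d$ vertices forces $\Delta$ to be the clique complex of the complete graph $K_{2d}$ with a perfect matching deleted. The base case ($d$ small, say $d=1$) is immediate: a flag homology $0$-sphere is two points, which is already the octahedral $0$-sphere.

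For the inductive step, fix a vertex $v$ and pass to $\lk(v)$. This is again a flag complex (any minimal non-face of $\lk(v)$ is a non-face of $\Delta$ not involving $v$, hence has size $2$), and since $\Delta$ is a homology sphere so is $\lk(v)$; its dimension is $d-2$. Its vertex set is exactly the neighborhood $N(v)$ of $v$ in the $1$-skeleton, so by the inductive hypothesis $|N(v)| \geq 2(d-1)$. The one place where topology enters is the observation that $v$ must have a non-neighbor: if $N(v)\cup\{v\}$ were all of $V$, then flagness would force $\Delta = v * \lk(v)$, a cone, which is contractible and in particular not homologous to $S^{d-1}$. Hence the vertex set of $\Delta$ contains $N(v)$, the vertex $v$ itself, and at least one further vertex, giving at least $2(d-1)+2 = 2d$ vertices and proving (a).

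For (b), assume $\gamma_1(\Delta)=0$, i.e. $\Delta$ has exactly $2d$ vertices. Then the inequality chain above is forced to be an equality for \emph{every} vertex $v$: each $v$ has exactly $2d-2$ neighbors and therefore exactly one non-neighbor. Consequently the complement of the $1$-skeleton is a perfect matching on the $2d$ vertices, with edges $\{v_i,u_i\}$ for $i=1,\dots,d$. Since $\Delta$ is flag, it is completely determined by its non-edges: a subset is a face precisely when it contains at most one of $v_i,u_i$ for each $i$. That is exactly the $d$-fold join of the $0$-sphere, i.e. the octahedral sphere, which closes the induction.

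I do not expect a genuine obstacle here; the argument is essentially an induction plus one topological input (a cone is not a homology sphere). The points that need a little care are verifying that $\lk(v)$ inherits both flagness and the full homology-sphere property (so that the inductive hypothesis applies), identifying its vertex set with $N(v)$, and handling the degenerate low-dimensional cases; none of these is difficult.
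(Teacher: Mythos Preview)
The paper does not actually prove this lemma; it is quoted from the cited references of Meshulam and Gal. Your argument is correct and is essentially the standard one: induct on dimension via links, use flagness to identify the vertex set of $\lk(v)$ with $N(v)$ and to rule out $v$ being adjacent to everything (else $\Delta$ would be a cone), and in the equality case read off that the complement of the $1$-skeleton is a perfect matching, forcing the cross-polytope. One small remark: your induction for part~(b) only invokes the inductive hypothesis for part~(a), so the two parts are not really proved ``together''---(a) is proved by induction and then (b) follows from (a) applied to all links. This is harmless but worth stating cleanly.
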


By definition, if $\Delta$ is a $(d-1)$-dimensional flag homology sphere, we have $f_1(\Delta) = 2d + \gamma_1(\Delta)$. For Theorem \ref{thm:smallGamma1} we will classify $\gamma$-vectors of those $\Delta$ for which $0 \leq \gamma_1(\Delta) \leq 3$, or equivalently $2d \leq f_1(\Delta) \leq 2d+3$. Notice that an octahedral sphere (of any dimension) has $\gamma = (1,0,0,\ldots)$.

If $\Delta$ is a flag homology $d$-sphere, $F\in\Delta$ and $|F|=k$, then $\lk(F)$ is a flag homology $(d-k)$-sphere (for flagness see Lemma \ref{lem:flag-facts}(b) below). The \emph{contraction} of the edge $\{u,v\}$ in $\Delta$ is the complex $\Delta' = \{ F \in \Delta : u \notin F \} \cup \{ (F \setminus \{u\})\cup \{v\} : F \in \Delta, u \in F\}$. By \cite[Theorem 1.4]{Nevo} $\Delta'$ is a sphere if $\Delta$ is a sphere, but it is not necessarily flag. The same holds for homology spheres \cite[Proposition 2.3]{Nevo-Novinsky}.  

We have the following relation of $\gamma$-polynomials:
\begin{equation}\label{eq:con}
 \gamma(\Delta; t) = \gamma(\Delta';t) + t\gamma(\lk(\{u,v\});t).
\end{equation}
%%%%%%%%%%%%%%%%%%%%%%%%%%%%%%%%%%%%%%%%%
Also, the \emph{suspension} $\susp(\Delta) = \Delta \cup \{ \{a\} \cup F, \{b\} \cup F: F \in \Delta\}$ (for vertices $a$ and $b$ not in the vertex set of $\Delta$), of a flag sphere $\Delta$ has the same $\gamma$-polynomial as $\Delta$:
$$\gamma(\susp(\Delta);t) = \gamma(\Delta;t).$$
Further, for $A\subseteq V$, define $\Delta[A]$ to be the \emph{induced subcomplex} of $\Delta$ on $A$, consisting of all faces $F$ of $\Delta$ such that $F\subseteq A$. The \emph{antistar} $\antist(v)$ of a vertex $v\in V$ is the induced subcomplex $\Delta[V-\{v\}]$.

In the following lemma we collect some known facts and some simple observations which will be used frequently in what follows.

\begin{lemma}\label{lem:flag-facts}
Let $\Delta$ be a flag complex on  vertex set $V$. Then the following holds:

(a) If $A\subseteq V$ then $\Delta[A]$ is flag.

(b) If $F\in \Delta$ then $\lk(F)$ is an induced subcomplex of $\Delta$, hence flag.

%(c) Let $\Delta'$ be obtained from $\Delta$ by contracting an edge $\{u,v\}$. If $\Delta'$ is not flag then $\{u,v\}$ is contained in an induced $4$-cycle in $\Delta$.

Let $K$ be a simplicial complex on vertex set $U$ and $\Gamma$ a subcomplex of $K$ on vertex set $A$. Then:

(c) If $\Gamma=K[A]$ then $K-\Gamma$ deformation retracts on $K[U-A]$.

(d) If $K$ and $\Gamma$ are homology spheres then $K-\Gamma$ has the same homology as a sphere of dimension $\dim K - \dim \Gamma - 1$. In particular, if $\dim K=\dim \Gamma$ then $K=\Gamma$.
\end{lemma}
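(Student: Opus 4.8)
The plan is to dispatch the five parts in order. Parts (a) and (b) are immediate from the definition of flagness, (c) is the only part that needs a genuine combinatorial argument, and (d)--(e) are applications of standard polyhedral topology. For (a): a minimal non-face $F$ of $\Delta[A]$ is in particular a non-face of $\Delta$ contained in $A$, so since $\Delta$ is flag it contains a two-element non-face $e$ of $\Delta$; but $e\subseteq A$ is also a non-face of $\Delta[A]$, so $F=e$ has size $2$. For (b): let $A=\{v\in V\setminus F: F\cup\{v\}\in\Delta\}$ be the vertex set of $\lk(F)$; I claim $\lk(F)=\Delta[A]$. One inclusion is clear. For the other, if $G\subseteq A$ lies in $\Delta$ then every edge of $F\cup G$ lies in $\Delta$---edges inside $F$ and inside $G$ trivially, and an edge $\{v,w\}$ with $v\in G\subseteq A$ and $w\in F$ because $F\cup\{v\}\in\Delta$---so flagness of $\Delta$ gives $F\cup G\in\Delta$, whence $G\in\lk(F)$. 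Flagness of $\lk(F)$ then follows from (a).

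For (c): suppose the contraction $\Delta'$ (with $u$ identified to $v$) is not flag, and let $F$ be a minimal non-face of $\Delta'$ with $|F|\ge 3$. From the description of $\Delta'$ one checks the two facts: an edge of $\Delta'$ avoiding $v$ is an edge of $\Delta$, and an edge $\{v,w\}$ of $\Delta'$ satisfies $\{v,w\}\in\Delta$ or $\{u,w\}\in\Delta$. If $v\notin F$ then all edges of $F$ lie in $\Delta$, so flagness of $\Delta$ gives $F\in\Delta\subseteq\Delta'$, a contradiction; hence $v\in F$, say $F=\{v\}\cup F_0$ with $|F_0|\ge 2$ and $u,v\notin F_0$. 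Every edge inside $F_0$ lies in $\Delta$. Since $\{v\}\cup F_0\notin\Delta$ and $\{u\}\cup F_0\notin\Delta$ (both are non-faces of $\Delta'$), flagness of $\Delta$ produces $w\in F_0$ with $\{v,w\}\notin\Delta$---hence $\{u,w\}\in\Delta$ by the edge condition---and $w'\in F_0$ with $\{u,w'\}\notin\Delta$---hence $\{v,w'\}\in\Delta$. Then $w\ne w'$, and the cycle on $u,w,w',v$ in this cyclic order is an induced $4$-cycle of $\Delta$ containing $\{u,v\}$: the four listed edges lie in $\Delta$ while the diagonals $\{u,w'\}$ and $\{v,w\}$ do not.

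For (d): a point $x$ of $|K|\setminus|\Gamma|$ has barycentric support a face of $K$ meeting $U-A$ (this is where $\Gamma=K[A]$ is used, so that $|\Gamma|$ is exactly the set of points supported on $A$); the homotopy that at time $t$ multiplies the $A$-coordinates of a point by $1-t$ and renormalizes is well defined (the sum of the $(U-A)$-coordinates of $x$ is positive), remains inside $|K|$ (supports only shrink), and gives a deformation retraction of $|K|\setminus|\Gamma|$ onto $|K[U-A]|$. For (e): put $d=\dim K$ and $e=\dim\Gamma$; since $|\Gamma|\cong S^e$ is a compact, locally contractible subpolyhedron of $|K|\cong S^d$, Alexander duality yields $\widetilde H_i(|K|\setminus|\Gamma|)\cong\widetilde H^{\,d-i-1}(S^e)$, which equals $\Z$ for $i=d-e-1$ and $0$ otherwise---that is, the reduced homology of $S^{d-e-1}$. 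When $d=e$ this reads $\widetilde H_{-1}(|K|\setminus|\Gamma|)\cong\Z$, which forces $|K|\setminus|\Gamma|=\emptyset$; then $|K|=|\Gamma|$, and since an interior point of any maximal face of $K$ must then lie in $|\Gamma|$, every maximal face of $K$ belongs to $\Gamma$, so $K=\Gamma$.

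The main obstacle is part (c): one has to pin down exactly which edges of a minimal non-face of the contraction are permitted to be absent from $\Delta$, and then extract the two witnessing vertices $w,w'$ that build the induced $4$-cycle. Parts (a), (b), (d) are bookkeeping, and (e) is routine once the correct form of Alexander duality (a sphere minus a compact, locally contractible subset) and the standard linear retraction of a polyhedron off a full subcomplex are invoked.
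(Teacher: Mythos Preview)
Your proof is correct and follows the same approach as the paper's: parts (a), (d), and (e) are dismissed there as obvious, well known, or direct from Alexander duality, while your argument for (c)---choosing a minimal non-face $F$ of $\Delta'$, observing $v\in F$, and extracting witnesses $w,w'\in F\setminus\{v\}$ with $\{v,w\},\{u,w'\}\notin\Delta$ to build the induced $4$-cycle $(u,w,w',v)$---is exactly the paper's argument with the details filled in. Your treatment of (b) is slightly more direct (verifying $\lk(F)=\Delta[A]$ in one step via flagness rather than inducting on $|F|$), but this is a cosmetic difference.
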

\begin{proof}
Part (a) is obvious. For (b), Let $v$ be a vertex of $\Delta$. If all proper subsets of a face $T\in \Delta$ are in $\lk(v)$, then by flagness $T\cup\{v\}\in \Delta$, hence $T\in \lk(v)$ so $\lk(v)$ is an induced subcomplex. If $F=T\cup\{v\}$ in $\Delta$ where $v\notin T$, then $\lk_{\Delta}(F) = \lk_{\lk(v)}(T)$, and by induction on the number of vertices in $F$ we conclude that $\lk(F)$ is an induced subcomplex. By part (a) it is flag, concluding (b).

%To prove (c), suppose $\Delta'$ is not flag and let $F$ be a minimal set such that all proper subsets of $F$ are in $\Delta'$ but $F\notin \Delta'$. Then by flagness of $\Delta$ there must exist two vertices $v'\neq u'$ in $F$ such that $\{v,v'\}\notin \Delta$ and $\{u,u'\}\notin \Delta$. Then $(v,u,v',u')$ is an induced $4$-cycle in $\Delta$, proving (c).

Part (c) is easy and well known, and (d) is a consequence of Alexander duality.
\end{proof}

It is clear that the link of any vertex in an octahedral sphere is itself an octahedral sphere. The following lemma, suggested to us by one of the referees and used in the sequel, shows that the converse is true as well.

\begin{lemma}\label{lem:octahedron}
Let $\Delta$ be a $(d-1)$-dimensional flag homology sphere on vertex set $V$ such that for any $v\in V$ $\lk(v)$ is an octahedral sphere. Then $\Delta$ is an octahedral sphere. 
\end{lemma}
\begin{proof}
Fix $v\in V$ and let $I$ be the set of interior vertices in the homology ball $\antist(v)$, i.e., the set of vertices that do not share an edge with $v$.
%By Lemma \ref{lem:flag-facts}(b) with $F=\{v\}$, $|I|>0$.
If $I$ is empty, then $\Delta$ is a cone over $\lk(v)$, which contradicts the fact that $\Delta$ is a homology sphere.
If $|I|$=1, say $I=\{u\}$, then $\lk(u)\subseteq \lk(v)$ are homology spheres of the same dimension, hence by Lemma \ref{lem:flag-facts}(d) $\lk(u)=\lk(v)$. Thus $\Delta$ contains the suspension of $\lk(v)$ and again by Lemma \ref{lem:flag-facts}(d) $\Delta=\susp(\lk(v))$. Thus $\Delta$ is octahedral.

Now assume for a contradiction that $|I|>1$. Then there exists a vertex $w\in \lk(v)$ with at least two neighbors in $I$,  say $a$ and $b$. Then $\lk_{\Delta}(w)$ contains the vertices in $\lk_{\lk_{\Delta}(v)}(w)$ and $\{a,b,v\}$, hence more then $2(d-1)$ vertices. But $\lk(w)$ is a $(d-2)$-dimensional octahedral sphere, so it has precisely $2(d-1)$ vertices. This is a contradiction.
\end{proof}

\begin{proposition}\label{prop:1}
If $\Delta$ is a $(d-1)$-dimensional flag homology sphere with $\gamma_1(\Delta)=1$, then $\gamma(\Delta;t) = 1+t$ and $\Delta$ is a repeated suspension over the boundary of a pentagon.
\end{proposition}

\begin{proof}
We will proceed by induction on dimension. As a base case $d=2$, observe that for $\Delta$ the boundary of an $n$-gon one has $f(\Delta;t) = 1+nt+nt^2$, $h(\Delta;t) = 1+(n-2)t+t^2$, and hence $\gamma(\Delta;t) = 1+(n-4)t$ and $\gamma_1(\Delta)=1$ only for the pentagon.

Now suppose $\Delta$ is a $(d-1)$-dimensional flag homology sphere with $2d+1$ vertices. If $\Delta$ is a suspension, it is a suspension of a homology $(d-2)$-sphere with $2d-1 = 2(d-1)+1$ vertices and we are finished by induction. 

Otherwise, the link of any vertex $v$ is a $(d-2)$-dimensional homology sphere with precisely $2d-2$ vertices, i.e. an octahedral sphere. By Lemma \ref{lem:octahedron} $\Delta$ is an octahedral sphere, so this case is impossible.
\end{proof}

\begin{proposition}\label{prop:2}
If $\Delta$ is a $(d-1)$-dimensional flag homology sphere with $\gamma_1(\Delta)=2$, then $\gamma(\Delta;t) \in \{ 1+2t, 1+2t+t^2\}$.
\end{proposition}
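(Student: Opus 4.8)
The plan is to mirror the strategy of Proposition~\ref{prop:1}, using induction on the dimension $d$, the suspension reduction, and the edge-contraction formula~\eqref{eq:con}. As a base case, take $d=2$: by the computation in the proof of Proposition~\ref{prop:1}, a flag $1$-sphere with $\gamma_1=2$ is the boundary of a hexagon, and $\gamma(\Delta;t)=1+2t$. For the inductive step, first dispose of the suspension case: if $\Delta$ is the suspension of $\Delta_0$, then $\gamma(\Delta;t)=\gamma(\Delta_0;t)$ and $\Delta_0$ is a $(d-2)$-dimensional flag sphere with $\gamma_1(\Delta_0)=2$, so we conclude by induction (noting $1+2t$ and $1+2t+t^2$ are exactly the allowed polynomials, and that $1+2t+t^2$ can only occur once $d$ is large enough, which the induction handles automatically). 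So assume $\Delta$ is not a suspension. Then (as in Proposition~\ref{prop:1}) every vertex link is a $(d-2)$-dimensional flag sphere with $f_1 \le 2(d-1)+2$, i.e.\ with $\gamma_1 \le 2$; by Lemma~\ref{lem:Gal} and induction, each vertex link has $\gamma$-polynomial in $\{1,\,1+t,\,1+2t,\,1+2t+t^2\}$.

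Next I would pick a vertex $a$ and analyze $\antist(a)$, which has $d+2$ vertices beyond possibly $a$ — more precisely $\lk(a)$ has at most $2d$ vertices among the $2d+2$ total, so the interior of $\antist(a)$ has at least two vertices. The key is to find an edge $\{b,c\}$ to contract so that $\Delta'$ has $\gamma_1(\Delta')$ strictly smaller (hence, by Proposition~\ref{prop:1} and Lemma~\ref{lem:Gal}, $\gamma(\Delta';t)\in\{1,1+t\}$), and to control $\gamma(\lk(\{b,c\});t)$. Contracting an edge drops $f_1$ by at least one (the edge disappears and the two vertices merge), and formula~\eqref{eq:con} gives $\gamma_1(\Delta)=\gamma_1(\Delta')+\gamma_0(\lk(\{b,c\}))=\gamma_1(\Delta')+1$, so automatically $\gamma_1(\Delta')=1$. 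Thus $\gamma(\Delta';t)$ is $1+t$ provided $\Delta'$ is a flag sphere (it is a sphere by \cite[Theorem 1.4]{Nevo}; for flagness one uses Lemma~\ref{lem:flag-facts}(c), i.e.\ one must choose $\{b,c\}$ avoiding induced $4$-cycles, or argue separately when no such edge exists). Then
\[
\gamma(\Delta;t)=\gamma(\Delta';t)+t\,\gamma(\lk(\{b,c\});t)=(1+t)+t\,\gamma(\lk(\{b,c\});t),
\]
and since $\lk(\{b,c\})$ is a flag $(d-3)$-sphere with $f_1 \le 2(d-2)+2$ it has $\gamma_1(\lk(\{b,c\}))\le 2$; but the coefficient of $t^2$ in $\gamma(\Delta;t)$ must be $0$ or~$1$, which forces $\gamma(\lk(\{b,c\});t)\in\{1,1+t\}$, giving $\gamma(\Delta;t)\in\{1+2t,1+2t+t^2\}$ as desired.

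The main obstacle is exactly the flagness of $\Delta'$ and, relatedly, producing a usable edge $\{b,c\}$: in Proposition~\ref{prop:1} the authors found the contracted edge among two specific vertices in the interior of an antistar, using that the link of $a$ was forced to be octahedral; here the links need not be octahedral, so one must work harder to locate an edge whose contraction stays flag, or handle the case where contracting any available edge creates an induced $4$-cycle (Lemma~\ref{lem:flag-facts}(c)) by a direct structural argument. I expect the bulk of the proof to be a case analysis on the vertices in the interior of $\antist(a)$ — whether they span edges, whether their links coincide with induced subcomplexes of $\lk(a)$ via Alexander duality (Lemma~\ref{lem:flag-facts}(e)), and ruling out the possibility of a proper sphere subcomplex — paralleling but extending the antipodal-pair and "deleting a vertex gives a proper sphere, impossible" arguments used for $\gamma_1=1$. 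Once a good edge is in hand, the $\gamma$-polynomial bookkeeping above is immediate.
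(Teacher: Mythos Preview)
Your overall strategy---induction on $d$, suspension reduction, and contracting a well-chosen edge to invoke~\eqref{eq:con} together with Proposition~\ref{prop:1}---is exactly the paper's. However, there is a genuine gap at the end of your second paragraph: you assert that ``the coefficient of $t^2$ in $\gamma(\Delta;t)$ must be $0$ or $1$,'' but this is precisely the statement being proved. From $\gamma_1(\lk(\{b,c\}))\le 2$ and the inductive hypothesis you only obtain $\gamma(\lk(\{b,c\});t)\in\{1,\,1+t,\,1+2t,\,1+2t+t^2\}$; nothing you have established rules out the last two possibilities, so as written the argument is circular.

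The paper closes this gap by splitting the non-suspension case according to $i(v)$, the number of interior vertices of $\antist(v)$. If some $v$ has $i(v)=2$, then the contraction $\Delta'$ is automatically the suspension over $\lk(v)$ (so flagness of $\Delta'$ is free), and $\gamma_1(\lk(\{b,c\}))=2$ is excluded by a short topological argument via Lemma~\ref{lem:flag-facts}(d),(e): were only the four vertices $\{v,b,c,e\}$ missing from $\lk(\{b,c\})$, then $\lk(v)-\lk(\{b,c\})$ would deformation retract to the single point $e$ yet be homologous to $S^0$, a contradiction. If instead $i(v)=3$ for \emph{every} $v$, then every vertex link is octahedral, whence $\lk(\{b,c\})$ is automatically octahedral and $\gamma(\lk(\{b,c\});t)=1$; here all the work goes into finding an edge inside $I(a)$ whose contraction remains flag, via the induced-$4$-cycle analysis you anticipated. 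In short, the case split decouples the two obstacles: you correctly flagged one (flagness of $\Delta'$) but overlooked the other (bounding $\gamma_1(\lk(\{b,c\}))$), and each case is hard in only one of the two respects.
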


\begin{proof}
Again we proceed by induction on dimension. For base case $d=2$, as we observed beforehand $\gamma_1(\Delta)=2$ only for the boundary of a hexagon, in which case
 $\gamma(\Delta;t) = 1+2t$. Assume $d>2$.

We now analyze the structure of $\Delta$ according to the number of vertices in the interior of the antistar of a vertex $v\in \Delta$, denoted by $i(v)$. We always have $i(v)>0$ as $\Delta$ is flag with nontrivial top homology (use Lemma \ref{lem:flag-facts}(b) with $F=\{v\}$).

If there is a vertex $v\in \Delta$ with $i(v)=1$, then $\Delta$ is the suspension over $\lk(v)$, and we are done by induction on dimension.

If there is a vertex $v\in \Delta$ with $i(v)=2$, let $b$ and $c$ denote the vertices in the interior of its antistar. First we show that $\{b,c\}\in \Delta$:
assume by contradiction that  $\{b,c\}$ is not an edge in $\Delta$. Then the homology $(d-2)$-sphere $\lk(b)$ must be contained in the induced subcomplex $\lk(v)$ and by Lemma \ref{lem:flag-facts}(d) we get $\lk(b) = \lk(v)$. Deleting $c$ gives a proper subcomplex of $\Delta$ that is itself a homology $(d-1)$-sphere (the suspension over $\lk(v)$), an impossibility. Thus $\{b,c\}$ must be an edge in $\Delta$.

Let $\Delta'$ be obtained from $\Delta$ by contracting the edge $\{b,c\}$. Then $\gamma_1(\Delta') = 1$. Since $\Delta'$ is also a flag homology sphere (it is the suspension over $\lk(v)$) we have $\gamma(\Delta')=1+t$ by Proposition \ref{prop:1}.
We now show that
$\gamma_1(\lk(\{b,c\}))\in\{0,1\}$. Let $m=\gamma_1(\lk(\{b,c\}))$ and 
assume by contradiction that $m\geq 2$.  
By Lemma \ref{lem:flag-facts}(b) $\lk(\{b,c\})$ is an induced subcomplex of codimension $1$ in $\lk(v)$, and by \ref{lem:flag-facts}(d)  $\lk(v)-\lk(\{b,c\})$ is homologous to the zero dimensional sphere. Thus $\lk(v)$ has at least $2$ vertices more than $\lk(\{b,c\})$, hence $\Delta$ has at least $5$ vertices more than $\lk(\{b,c\})$. This means $\gamma_1(\Delta)\geq m+1\geq 3$, a contradiction.
 
Thus $\gamma(\lk(\{b,c\})\in \{1,1+t\}$ and by \eqref{eq:con}, $\gamma(\Delta)\in\{1+2t,1+2t+t^2\}$ in this case.

The last case to consider is when $i(v)=3$ for every vertex $v\in \Delta$. In this case, a vertex count tells us any $\lk(v)$ is an octahedral sphere, hence by Lemma \ref{lem:octahedron} $\Delta$ is an octahedral sphere, so this case is impossible. 
\end{proof}

\begin{proposition}\label{prop:3}
If $\Delta$ is a $(d-1)$-dimensional flag homology sphere with $\gamma_1(\Delta)=3$, then $\gamma(\Delta;t) \in \{ 1+3t, 1+3t+t^2, 1+3t+2t^2, 1+3t+3t^2+t^3\}$.
\end{proposition}
\begin{proof}
Again we proceed by induction on dimension. For base cases $d=2,3$  
$\gamma(\Delta;t) = 1+3t$ and there is nothing to prove. Assume $d>3$.

As in the proof of Proposition \ref{prop:2}, we fix a vertex $v\in \Delta$ and analyze the structure of $\Delta$ according to the number $i(v)>0$.
If $i(v)=1$ then $\Delta$ is the suspension over $\lk(v)$, and we are done by induction on dimension.
  
If $i(v)=2$, then as before we conclude that $\{b,c\}\in \Delta$, where $b$ and $c$ are the vertices in the interior of the antistar of $v$ in $\Delta$, and for $\Delta'$ obtained from $\Delta$ by contracting the edge $\{b,c\}$ observe that $\Delta'$ is the flag homology sphere which is the suspension over $\lk(v)$. Thus $\gamma(\Delta')=\gamma(\lk(v))\in\{1+2t, 1+2t+t^2\}$ by Proposition \ref{prop:2}.  

As in the proof of Proposition \ref{prop:2}, $\gamma_1(\lk(\{b,c\}))\leq \gamma_1(\Delta)-1$, so in this case $\gamma_1(\lk(\{b,c\})) \leq 2$. Thus by Propositions \ref{prop:1} and \ref{prop:2}, $\gamma(\lk(\{b,c\}))\in \{1, 1+t, 1+2t, 1+2t+t^2\}$.  By (\ref{eq:con}), to conclude the assertion we need to show that the two cases where one of $\gamma(\Delta')$ and $\gamma(\lk(\{b,c\}))$ equals $1+2t$ and the other equals $1+2t+t^2$ are impossible. In these two cases $\gamma_1(\lk(\{b,c\}))=\gamma_1(\lk(v))=2$. As $\lk(\{b,c\})$ is an induced homology sphere of codimension 1
in $\lk(v)$, by Lemma \ref{lem:flag-facts}(d) 
it separates $\lk(v)$ and we conclude that $\lk(v)=\susp(\lk(\{b,c\}))$, hence $\gamma(\lk(\{b,c\}))=\gamma(\lk(v))=\gamma(\Delta')$, showing the above two cases are impossible.

If $i(u)=4$ for every vertex $u\in \Delta$, then all vertex links are octahedral spheres, an impossibility by Lemma \ref{lem:octahedron}. 

We are left to deal with the case where for every vertex $u\in \Delta$, $i(u)\geq 3$ and there exists a vertex $v\in \Delta$ with $i(v)=3$.
Let $I(v)=\{b,c,e\}$ be the set of interior vertices in $\antist(v)$.
By \ref{lem:flag-facts}(c) and (d)
the induced subcomplex $\Delta[v,b,c,e]$ is homotopic to $\Delta-\lk(v)$ and hence homologous to the zero dimensional sphere. Thus,  $\Delta[b,c,e]$ is either a triangle or a $3$-path, say $(b,c,e)$. 

If $\Delta[b,c,e]$ is a triangle, let $F$ be a facet in $\antist(v)$ containing it and $x$ a vertex in $F \cap \lk(v)$. %Then $\gamma_1(\lk(x))\geq 2$, i.e., $i(x)<3$, so this case is impossible.
We see $\lk_{\lk(v)}(x)$ is a $(d-3)$-flag homology sphere and so has at least $2d-4$ vertices by Lemma \ref{lem:Gal}. But then $\lk_{\Delta}(x)$ has at least $2d$ vertices (now counting $b, c, e$, and $v$). Thus since $\Delta$ itself has $2d+3$ vertices, $x$ can have at most $2$ vertices in its antistar. This contradicts the assumption that $i(x) \geq 3$.

Now suppose $\Delta[b,c,e]$ is the $3$-path $(b,c,e)$.
By Proposition \ref{prop:1}, $\lk(v)$ is a repeated suspension over a pentagon. Denote the pentagon by $C$.
The argument we used in the case of a triangle shows that we can assume that a vertex  $x\in \lk(v)$ is contained in $\lk(\{b,c\})$ only if $x\in C$ (as otherwise $i(x)<3$). Thus $\lk(\{b,c\})\subseteq C$, hence the dimension of $\Delta$ is at most $3$. Thus  
$\gamma(\Delta)$ satisfies $0=\gamma_3=\gamma_4=\ldots$ and $0\leq \gamma_2\leq \lfloor\frac{\gamma_1^2}{4}\rfloor=2$. The assertion follows.  
\end{proof}
%%%%% Constructions %%%%%
 
To complete the proof of Theorem \ref{thm:smallGamma1} we construct a flag sphere for each admissible $\gamma$-vector. Let $C_m$ denote the $m$-gon and $*$ the simplicial join operation. As mentioned before, a $\gamma$-vector of the form $(1,m,0,0,\ldots)$ is $\gamma(C_{m+4})$, $m\geq0$.
Recall that the $\gamma$-polynomial is multiplicative with respect to join.
Then $\gamma(C_5*C_5;t)=(1+t)^2=1+2t+t^2$, $\gamma(C_5*C_6;t)=(1+t)(1+2t)=1+3t+2t^2$ and $\gamma(C_5*C_5*C_5;t)=(1+t)^3=1+3t+3t^2+t^3$. 
Lastly, let $\Delta$ be obtained from $C_5*C_5$ by subdividing an edge whose vertices belong to different copies of $C_5$. By \eqref{eq:con} (see also \cite[Proposition 2.4.3]{Gal}) we get $\gamma(\Delta;t)=(1+t)^2 + t = 1+3t+t^2$.
$\square$

%%%%%%%%%%%
%%%%%%%%%%%
\section{Stronger inequalities}\label{sec:FFK}

A $(d-1)$-dimensional simplicial complex $\Delta$ on a vertex set $V$ is \emph{balanced} if there is a coloring of its vertices $c: V\to [d]$  such that for every face $F\in \Delta$ the restriction map $c: F\to [d]$ is injective. That is, every face has distinctly colored vertices.

Frohmader \cite{Frohmader} proved that the $f$-vectors of flag complexes form a (proper) subset of the $f$-vectors of balanced complexes. (This was conjectured earlier by Eckhoff and Kalai, independently.) Further, a characterization of the $f$-vectors of balanced complexes is known \cite{FFK}, yielding stronger upper bounds on $f_{i+1}$ in terms of $f_i$ than the Kruskal-Katona inequalities, namely the \emph{Frankl-F{\"u}redi-Kalai inequalities}. For example, a balanced $1$-dimensional complex is a bipartite graph, hence satisfies $f_2\leq f_1^2 /4$, while the complete graph has $f_2=\binom{f_1}{2}$. See \cite{FFK} for the general description of the Frankl-F{\"u}redi-Kalai inequalities.

Because the $\Gamma$-complexes of Section \ref{sec:Gam} are flag complexes, Frohmader's result shows that the $\gamma$-vectors of Theorem \ref{thm:KrKex} satisfy the Frankl-F{\"u}redi-Kalai inequalities. The same is easily verified for the $\gamma$-vectors given by Theorem \ref{thm:smallGamma1} and in Table \ref{table} for the exceptional Coxeter complexes. We obtain the following strengthening of Theorem \ref{thm:KrKex}.
%\kyle{I still need to check the exceptional cases!}

\begin{theorem}\label{thm:FFK}
The $\gamma$-vector of $\Delta$ satisfies the Frankl-F{\"u}redi-Kalai inequalities for each of the following classes of flag homology spheres:
\begin{itemize}
\item[(a)] $\Delta$ is a Coxeter complex.

\item[(b)] $\Delta$ is the simplicial complex dual to an associahedron.

\item[(c)] $\Delta$ is the simplicial complex dual to a cyclohedron.

\item[(d)] $\Delta$ has $\gamma_1(\Delta) \leq 3$.
\end{itemize}

\end{theorem}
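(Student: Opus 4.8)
The plan is to reduce Theorem~\ref{thm:FFK} to facts that are, by the point of Section~\ref{sec:FFK}, essentially already in hand. The key observation is that the Frankl-F\"uredi-Kalai (FFK) inequalities are satisfied automatically by the $f$-vector of \emph{any} balanced complex, and by Frohmader's theorem \cite{Frohmader} the $f$-vector of any flag complex is the $f$-vector of some balanced complex; hence the $f$-vector of a flag complex always satisfies the FFK inequalities. So for parts (a)--(c) I would simply invoke the $\Gamma$-complex constructions of Section~\ref{sec:Gam}: by Corollary~\ref{cor:ABDgam} and the corollaries for the associahedron and cyclohedron, in each of these cases $\gamma(\Delta)$ is literally the $f$-vector of a flag simplicial complex ($\Gamma(\Pk_n)$, $\Gamma(\Des_n)$, $\Gamma(\Des_n^D)$, $\Gamma(\Pk_n(312))$, or $\Gamma(P_n)$), and Frohmader's result finishes the job. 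The only piece not covered by the $\Gamma$-complexes is the exceptional Coxeter complexes, whose $\gamma$-vectors are listed explicitly in Table~\ref{table}; for those finitely many short vectors one checks the FFK inequalities directly (a finite computation), exactly as was done for the Kruskal-Katona inequalities in the proof of part (a) of Theorem~\ref{thm:KrKex}.

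For part (d) I would handle the four possible $\gamma$-polynomials produced by Theorem~\ref{thm:smallGamma1}, namely $1$, $1+t$, $1+2t$, and $1+2t+t^2$, one at a time. Each of these is so short that verifying the FFK inequalities is immediate: the constant polynomial is trivial; $1+t$ and $1+2t$ have no degree-$2$ term to constrain; and for $1+2t+t^2$ one needs only the single inequality bounding $f_2=1$ in terms of $f_1=2$, which holds since a balanced $1$-dimensional complex (a bipartite graph) on an edge set of size $f_1=2$ can certainly realize $f_2=1$ — concretely $1+2t+t^2$ is the $f$-vector of a single edge together with an isolated vertex, which is balanced (indeed it is flag, being the join of two copies of $S^0$, matching the remark that $\gamma=1+2t+t^2$ is achieved by the join of two pentagon boundaries). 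So in every case $\gamma(\Delta)$ is the $f$-vector of a balanced complex and the FFK inequalities follow.

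Assembling these observations, the proof is short: state that the FFK inequalities hold for the $f$-vector of every balanced complex, cite Frohmader to pass from flag to balanced, apply this to the $\Gamma$-complexes for (a)--(c), dispatch the exceptional types and the small-$\gamma_1$ cases by the finite checks described above, and conclude. I do not expect a genuine obstacle here; the one thing to be careful about is making sure the exceptional-type and small-$\gamma_1$ verifications really are against the FFK bounds (which are stronger than Kruskal-Katona) rather than merely Kruskal-Katona — but since all the vectors involved have at most one "non-trivial" coefficient comparison, and the relevant FFK bound in the $1$-dimensional case is just $f_2\le f_1^2/4$, these checks are routine and can be stated as such with a reference to \cite{FFK} for the general inequalities.

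\begin{proof}
The Frankl-F\"uredi-Kalai inequalities \cite{FFK} are satisfied by the $f$-vector of every balanced simplicial complex, and by Frohmader's theorem \cite{Frohmader} the $f$-vector of any flag complex is also the $f$-vector of some balanced complex; hence the $f$-vector of any flag complex satisfies the FFK inequalities.

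For part~(a), the type $A_{n-1}$, $B_n$, and $D_n$ Coxeter complexes have, by Corollary~\ref{cor:ABDgam}, $\gamma$-vector equal to the $f$-vector of the flag complex $\Gamma(\Pk_n)$, $\Gamma(\Des_n)$, or $\Gamma(\Des_n^D)$, respectively, so the claim follows from the previous paragraph. For the exceptional types, one checks directly from Table~\ref{table} that each of the (short) $\gamma$-vectors listed there satisfies the FFK inequalities; this is a finite computation analogous to the Kruskal-Katona check in the proof of Theorem~\ref{thm:KrKex}(a).

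For parts~(b) and~(c), $\gamma(\Ass_n) = f(\Gamma(\Pk_n(312)))$ and $\gamma(\Cyc_n) = f(\Gamma(P_n))$, and both $\Gamma(\Pk_n(312))$ and $\Gamma(P_n)$ are flag, so again the FFK inequalities hold.

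For part~(d), by Theorem~\ref{thm:smallGamma1} the $\gamma$-polynomial of $\Delta$ is one of $1$, $1+t$, $1+2t$, $1+2t+t^2$. In the first three cases there is no degree-$2$ (or higher) coefficient to constrain, so the FFK inequalities hold trivially. In the remaining case $\gamma(\Delta) = (1,2,1)$, and the only relevant FFK inequality bounds $f_2$ in terms of $f_1$; since a balanced $1$-dimensional complex is a bipartite graph, $f_2 = 1 \leq 2^2/4 = f_1^2/4$, so $(1,2,1)$ is the $f$-vector of a balanced complex (indeed of the flag complex consisting of a single edge and an isolated vertex). Hence in every case $\gamma(\Delta)$ satisfies the FFK inequalities.
\end{proof}
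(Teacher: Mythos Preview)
Your proposal is correct and follows essentially the same approach as the paper: invoke Frohmader's theorem to conclude that the $f$-vectors of the flag $\Gamma$-complexes from Section~\ref{sec:Gam} satisfy the Frankl-F\"uredi-Kalai inequalities, and then dispatch the exceptional Coxeter types (Table~\ref{table}) and the small-$\gamma_1$ cases from Theorem~\ref{thm:smallGamma1} by direct inspection. The paper's argument is the short paragraph immediately preceding the theorem statement; your write-up merely spells out the case~(d) verification in slightly more detail.
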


%\eran{remark added:}
\begin{remark}
The complexes $\Gamma(S)$ where $S \in \{\Des_n, \Pk_n, \Des_n^D\}$ 
are balanced. The color of a vertex $v$ with a peak at position $i$ is $\lceil \frac{i}{2}\rceil$.
\end{remark}

Similarly this suggests the following strengthening of Conjecture \ref{conj:flagKrK}.

\begin{conjecture}\label{conj:flagFFK}
If $\Delta$ is a flag homology sphere then $\gamma(\Delta)$ satisfies the  Frankl-F{\"u}redi-Kalai inequalities.
\end{conjecture}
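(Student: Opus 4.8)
The plan is to aim for a formally stronger assertion: \emph{if $\Delta$ is a flag homology sphere then $\gamma(\Delta)$ is the $f$-vector of a \textbf{flag} simplicial complex.} Granting this, Conjecture \ref{conj:flagFFK} follows at once from Frohmader's theorem \cite{Frohmader} (flag $f$-vectors are balanced $f$-vectors) together with the characterization of balanced $f$-vectors in \cite{FFK}, and it simultaneously subsumes Conjecture \ref{conj:flagKrK}. So the real task is to attach to each flag homology sphere $\Delta$ a flag complex $\Gamma(\Delta)$ with $f(\Gamma(\Delta)) = \gamma(\Delta)$, uniformly extending the ad hoc constructions of Section \ref{sec:Gam}.

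The natural engine is an induction on the number of vertices, driven by the operations already used in Section \ref{sec:small}. If $\Delta = \susp(\Delta_0)$, then $\gamma(\Delta) = \gamma(\Delta_0)$ and one sets $\Gamma(\Delta) = \Gamma(\Delta_0)$. If $\Delta = \Delta_1 * \Delta_2$ is a nontrivial join, then $\gamma(\Delta;t) = \gamma(\Delta_1;t)\gamma(\Delta_2;t)$ and one takes $\Gamma(\Delta) = \Gamma(\Delta_1) * \Gamma(\Delta_2)$, which is flag and satisfies $f(\Gamma(\Delta);t) = f(\Gamma(\Delta_1);t) f(\Gamma(\Delta_2);t)$. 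Otherwise, suppose one can locate an edge $e = \{u,v\}$ whose contraction $\Delta'$ is still flag, i.e., by Lemma \ref{lem:flag-facts}(c), an edge lying in no induced $4$-cycle. Then $\Delta'$ is a flag sphere (by \cite{Nevo}) with one fewer vertex, and $\lk(e)$ is a flag sphere of smaller dimension, so induction supplies flag complexes $\Gamma' = \Gamma(\Delta')$ and $\Gamma'' = \Gamma(\lk(e))$ realizing $\gamma(\Delta')$ and $\gamma(\lk(e))$. If $\Gamma''$ can be realized as an \emph{induced} subcomplex of $\Gamma'$, then coning a fresh vertex $w$ over that copy of $\Gamma''$ produces $\Gamma(\Delta)$, which is flag (a cone over an induced subcomplex of a flag complex is flag) and has $f(\Gamma(\Delta);t) = f(\Gamma';t) + t\,f(\Gamma'';t) = \gamma(\Delta';t) + t\,\gamma(\lk(e);t) = \gamma(\Delta;t)$ by Equation \eqref{eq:con}. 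Making the induced-subcomplex step legitimate forces a strengthened inductive hypothesis: each $\Gamma(\Delta)$ must be built together with distinguished induced subcomplexes realizing $\Gamma(\lk(F))$ for the relevant faces $F$, compatibly with suspension, join, and contraction.

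I expect two genuine obstacles. The first is the \emph{existence of a contractible edge} in a flag sphere that is not a join: this fails for the octahedral sphere, where every edge lies in an induced $4$-cycle, although there $\gamma = (1,0,\dots)$ is already the $f$-vector of the complex $\{\emptyset\}$. It is unclear whether ``every flag homology sphere is a join or else admits an edge in no induced $4$-cycle'' is true in general; isolating the correct list of irreducible base cases, or establishing such a dichotomy, seems to be the crux. The second is \emph{coherence}: the flag complex built for $\Delta'$ along one branch of the recursion must literally contain an induced copy of the flag complex built for $\lk(e)$ along a different branch, which is exactly why the induction hypothesis must be enriched and is where the delicate bookkeeping will live. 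Finally, the assertion implies Gal's still-open Conjecture \ref{conj:Gal}, so a complete proof along these lines must absorb whatever makes $\gamma$-nonnegativity hard; a realistic intermediate target is to carry out the scheme for flag spheres with enough extra structure --- flag polytopal, vertex-decomposable, or constructible spheres --- where edge contractions and the required subcomplex realizations are better understood.
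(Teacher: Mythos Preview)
The statement you are attempting to prove is a \emph{conjecture}, not a theorem: the paper does not claim a proof of Conjecture \ref{conj:flagFFK}. It is stated as an open problem, supported only by the special cases in Theorem \ref{thm:FFK} and the observation that it holds in dimension at most $4$. There is therefore no ``paper's own proof'' against which to compare.

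Your proposal is, as you yourself make clear, not a proof but a research programme. You correctly identify that the stronger target you aim for --- that $\gamma(\Delta)$ is the $f$-vector of a \emph{flag} complex --- is precisely what the paper poses as the open Problem immediately following Conjecture \ref{conj:flagFFK}, and you correctly note that even the weaker Conjecture \ref{conj:flagFFK} already implies Gal's Conjecture \ref{conj:Gal}, which is open. So any purported proof must in particular resolve Gal's conjecture; your acknowledgement of this is honest and accurate, but it means the proposal cannot be assessed as a proof.

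Regarding the specific strategy: the inductive scheme via suspension, join, and edge contraction is natural and is exactly the machinery the paper uses in Section \ref{sec:small} for the small cases. But the two obstacles you flag are genuine and, to my knowledge, unresolved. First, the existence of an edge in no induced $4$-cycle (so that contraction preserves flagness) is not known for arbitrary flag homology spheres that are not joins; the proof of Proposition \ref{prop:2} establishes this only under the strong hypothesis $\gamma_1 \leq 2$ by a delicate case analysis. Second, even granting such an edge, the ``coherence'' step --- realizing $\Gamma(\lk(e))$ as an induced subcomplex of $\Gamma(\Delta')$ --- has no mechanism behind it; nothing in the recursion guarantees the two independently-constructed flag complexes are compatible in this way, and strengthening the induction hypothesis to carry this data does not by itself produce it. So the proposal is a reasonable outline of where the difficulties lie, but it does not constitute a proof, nor does the paper claim one.
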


As mentioned in the Introduction, this conjecture is true for flag homology spheres of dimension at most $4$. 
%However, while closer to the truth than Conjecture \ref{conj:flagKrK}, it is still not sharp. In particular, the vector $(1,4,5,1)$ satisfies the Frankl-F\"uredi-Kalai inequalities, but it is possible using techniques as in Section \ref{sec:small} that there is no flag homology sphere with this $\gamma$-vector. It happens that $(1,4,5,1)$ falls in the gap between the set of $f$-vectors of flag complexes and the set of $f$-vectors of balanced complexes. That is, there is no flag complex whose $f$-vector is $(1,4,5,1)$. 
We do not have a counterexample to the following strengthening of this
conjecture.
%, which may still be sharp.

\begin{problem}\label{prob:flagFlag}
If $\Delta$ is a flag homology sphere, then $\gamma(\Delta)$ is the $f$-vector of a flag complex.
\end{problem}
Very recently Frohmader (personal communication) verified that the $\gamma$-vectors of the exceptional Coxeter complexes are the $f$-vector of flag complexes, by straightforward `greedy' constructions. 

%\begin{acknowledgements} 
{\it{Acknowledgements.}}
We thank the referees for helpful suggestions.
%\end{acknowledgements}

\end{document}